\numberwithin{subcase}{case}
\newtheorem{theorem}{Theorem}[section]
\newtheorem{corollary}[theorem] {Corollary}
\newtheorem{example}[theorem]{Example}
\newtheorem{lemma} [theorem]{Lemma}
\newtheorem{proposition}[theorem]{Proposition}
\newcommand{\Z}{{\mathbb Z}}
\newcommand{\Q}{{\mathbb Q}}
\newcommand{\N}{{\mathbb N}}
\newcommand{\F}{{\mathbb F}}
\newtheoremstyle{case}{}{}{}{}{}{:}{ }{}
\theoremstyle{case}
\begin{document}
	\label{'ubf'}  
	\setcounter{page}{1} 
	\markboth
	{\hspace*{-9mm} \centerline{\footnotesize
			Characterization of prime divisors}}
	{\centerline{\footnotesize 
			Characterization of prime divisors
		} \hspace*{-9mm}}
	
	\vspace*{-2cm}

	\begin{center}
		
		{\textbf{On characterization of prime divisors of the index of a quadrinomial}\\
			\vspace{.2cm}
			\medskip
			{\sc Tapas Chatterjee}\\
			{\footnotesize  Department of Mathematics,}\\
			{\footnotesize Indian Institute of Technology Ropar, Punjab, India.}\\
			{\footnotesize e-mail: {\it tapasc@iitrpr.ac.in}}
			
			\medskip
			{\sc Karishan Kumar}\\
			{\footnotesize Department of Mathematics, }\\
			{\footnotesize Indian Institute of Technology Ropar, Punjab, India.}\\
			{\footnotesize e-mail: {\it karishan.22maz0012@iitrpr.ac.in}}
			\medskip}
		
	\end{center}
	\thispagestyle{empty} 
	\vspace{-.4cm}
	
	\hrulefill
	\begin{abstract}  
		{\footnotesize }
		Let $\theta$ be an algebraic integer and $f(x)=x^{n}+ax^{n-1}+bx+c$ be the minimal polynomial of $\theta$ over the rationals. Let $K=\Q(\theta)$ be a number field and $\mathcal{O}_{K}$ be the ring of integers of $K.$ In this article, we characterize all the prime divisors of the discriminant of $f(x)$ which do not divide the index of $f(x).$ As a fascinating corollary, we deduce necessary and sufficient conditions for $\Z[\theta]$ to be integrally closed, where $\theta$ is associated with certain quadrinomials.
		
	\end{abstract}
	\hrulefill
	
	\noindent 
	{\small \textbf{Key words and phrases}: Dedekind criterion; Discriminant; Index of an algebraic integer; Monogenic number fields; Ring of algebraic integers.\\
		
		\noindent
		{\bf{Mathematics Subject Classification 2020:}} Primary: 11R04, 11R29, 11Y40; Secondary: 11R09, 11R21.
		
		\vspace{-.37cm}
		
		\section{\bf Introduction}
		
		Let $\mathcal{O}_{K}$ be the ring of algebraic integers of a number field $K=\mathbb{Q}(\theta),$ where $\theta$ is
		an algebraic integer with minimal polynomial $f(x)$ over the field $\mathbb{Q}.$
		In $1878,$ Dedekind proved a remarkable theorem that established a connection between the decompositions of the polynomial $f(x)$ modulo $p$ and the factorization of $p\mathcal{O}_{K}$ into a product of prime ideals of $\mathcal{O}_{K}.$ He proved the following theorem \cite[Theorem 4.33]{WN}:
		\begin{theorem}
			Let $p$ be a prime and $f(x)$ be the minimal polynomial of an algebraic integer $\theta$ over the field $\mathbb{Q}$ such that
			$$\bar{f}(x)=\bar {f}_1(x)^{a_1}\bar {f}_2(x)^{a_2}\cdots\bar {f}_t(x)^{a_t}$$ be the factorization of $\bar{f}(x)$ as a product of powers of distinct monic irreducible polynomials over the field $\F_{p},$ where each $f_i(x)\in \mathbb{Z}[x]$ and $p \nmid [\mathcal{O}_{K}:\mathbb{Z}[\theta]].$ Then,  $$p\mathcal{O}_{K}= p_{1}^{a_1}.p_{2}^{a_2}\cdots p_{t}^{a_t},$$ where $p_{1},$ $p_{2},$ $\ldots p_{t}$ are distinct prime ideals of $\mathcal{O}_{K},$ $p_{j}=p\mathcal{O}_{K}+{f}_{j}(\theta)\mathcal{O}_{K},$ and the norm of these prime ideals $p_{j}$ is equal to $p^{deg({f}_j(x))},$ for all $j\in\{1, 2,\ldots, t\}.$
		\end{theorem}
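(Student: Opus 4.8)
The plan is to derive everything from one key reduction: the hypothesis $p \nmid [\mathcal{O}_K:\mathbb{Z}[\theta]]$ forces the inclusion $\mathbb{Z}[\theta] \hookrightarrow \mathcal{O}_K$ to become an isomorphism after reduction mod $p$. Indeed, set $C = \mathcal{O}_K/\mathbb{Z}[\theta]$, a finite abelian group of order $[\mathcal{O}_K:\mathbb{Z}[\theta]]$, which is prime to $p$; hence $C/pC = 0$ and $C$ has no $p$-torsion, so applying the snake lemma (or a direct diagram chase) to $0 \to \mathbb{Z}[\theta] \to \mathcal{O}_K \to C \to 0$ tensored with $\mathbb{Z}/p\mathbb{Z}$ gives a ring isomorphism $\mathbb{Z}[\theta]/p\mathbb{Z}[\theta] \cong \mathcal{O}_K/p\mathcal{O}_K$.

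Next I would identify the left-hand side explicitly. Since $f$ is the minimal polynomial of $\theta$ we have $\mathbb{Z}[\theta] \cong \mathbb{Z}[x]/(f(x))$, hence $\mathbb{Z}[\theta]/p\mathbb{Z}[\theta] \cong \mathbb{F}_p[x]/(\bar{f}(x))$, with $\theta$ going to the class of $x$. Combining this with the previous step and applying the Chinese Remainder Theorem to the factorization $\bar{f} = \prod_{i=1}^t \bar{f}_i^{a_i}$ into pairwise coprime factors, I obtain a ring isomorphism
\[
\mathcal{O}_K/p\mathcal{O}_K \;\cong\; \prod_{i=1}^{t} \mathbb{F}_p[x]/(\bar{f}_i(x)^{a_i}).
\]
Each factor $\mathbb{F}_p[x]/(\bar{f}_i^{a_i})$ is a local ring whose maximal ideal is generated by the class of $\bar{f}_i$ and whose residue field is $\mathbb{F}_p[x]/(\bar{f}_i) \cong \mathbb{F}_{p^{\deg f_i}}$.

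From here the conclusions about the primes are essentially forced. The prime ideals of $\mathcal{O}_K$ lying over $p$ correspond bijectively to the maximal ideals of $\mathcal{O}_K/p\mathcal{O}_K$, hence — through the displayed isomorphism — to the indices $i = 1,\dots,t$; chasing the maximal ideal of the $i$-th factor back through the isomorphisms shows it is the image of $p\mathcal{O}_K + f_i(\theta)\mathcal{O}_K$, so each $\mathfrak{p}_i := p\mathcal{O}_K + f_i(\theta)\mathcal{O}_K$ is prime and these are precisely the primes over $p$. The residue field of $\mathfrak{p}_i$ is that of the $i$-th factor, namely $\mathbb{F}_{p^{\deg f_i}}$, giving norm $p^{\deg f_i}$.

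The remaining and most delicate point is the exponent: proving $p\mathcal{O}_K = \prod_i \mathfrak{p}_i^{a_i}$ rather than merely $p\mathcal{O}_K = \prod_i \mathfrak{p}_i^{e_i}$ for some $e_i \ge 1$ — the latter already being granted by unique factorization of ideals in the Dedekind domain $\mathcal{O}_K$ together with the fact that exactly the $\mathfrak{p}_i$ occur. For this I would compare the two ring decompositions of $\mathcal{O}_K/p\mathcal{O}_K$: on one side it is $\prod_i \mathbb{F}_p[x]/(\bar{f}_i^{a_i})$, and on the other, by CRT applied to the pairwise coprime ideals $\mathfrak{p}_i^{e_i}$, it is $\prod_i \mathcal{O}_K/\mathfrak{p}_i^{e_i}$. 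Matching the local factors at corresponding maximal ideals gives $\mathcal{O}_K/\mathfrak{p}_i^{e_i} \cong \mathbb{F}_p[x]/(\bar{f}_i^{a_i})$, and taking $\mathbb{F}_p$-dimensions — using that the chain $\mathcal{O}_K \supseteq \mathfrak{p}_i \supseteq \cdots \supseteq \mathfrak{p}_i^{e_i}$ has successive quotients isomorphic to the residue field $\mathcal{O}_K/\mathfrak{p}_i$, by invertibility of $\mathfrak{p}_i$, so that $\dim_{\mathbb{F}_p} \mathcal{O}_K/\mathfrak{p}_i^{e_i} = e_i \deg f_i$ — yields $e_i \deg f_i = a_i \deg f_i$, hence $e_i = a_i$. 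The main obstacle is exactly this bookkeeping: one must verify the two product decompositions are compatible factor by factor (the isomorphism respects the matching of maximal ideals), which is what allows the dimension count to be applied locally rather than only in aggregate as $\sum_i e_i \deg f_i = n = \sum_i a_i \deg f_i$.
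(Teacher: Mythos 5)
Your proof is correct. Note that the paper does not actually prove this statement: it quotes Dedekind's 1878 factorization theorem as a known result, citing Narkiewicz \cite[Theorem 4.33]{WN}, so there is no in-paper argument to compare against. Your argument is the standard one — the index hypothesis gives the ring isomorphism $\mathbb{Z}[\theta]/p\mathbb{Z}[\theta]\cong\mathcal{O}_K/p\mathcal{O}_K$ via the Tor/snake-lemma computation, CRT identifies the quotient with $\prod_i\mathbb{F}_p[x]/(\bar f_i^{a_i})$, and the exponents are pinned down by matching the canonical local factors of the Artinian ring $\mathcal{O}_K/p\mathcal{O}_K$ (which are indexed by its maximal ideals, so the matching you worry about is automatic) and counting $\mathbb{F}_p$-dimensions through the filtration $\mathfrak{p}_i^j/\mathfrak{p}_i^{j+1}\cong\mathcal{O}_K/\mathfrak{p}_i$. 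All steps are sound and complete.
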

		But the converse of the theorem was proved after a long time in $2008$ \cite {skm}. Further, Dedekind proved a useful criterion known as Dedekind criterion (\ref{T4}) which plays a very important role in finding out the prime factors of the index of $f(x).$ Recently, many mathematicians \cite{CK2, CK3, CK4, CK5, jss, skm} have proved some results related to monogenity of algebraic number fields associated to trinomials and a specific category of quadrinomials. In this direction, we investigate the case of quadrinomials.
		In this article, we use the Dedekind criterion to characterize the primes which divide $[\mathcal{O}_{K}:\mathbb{Z}[\theta]],$ where $[\mathcal{O}_{K}:\mathbb{Z}[\theta]]$ represents the index of $\mathbb{Z}[\theta]$ in $\mathcal{O}_{K}$ and $\theta$ is a root of the irreducible polynomial $$f(x)=x^{n}+ax^{n-1}+bx+c\in \mathbb{Z}[x],$$ with $n>4, ~abc\neq 0,$ $n^{2}=ak,$ $k \in \mathbb{N}.$ This gives us necessary and sufficient conditions for $\Z[\theta]$ to be integrally closed, which depends only on $a,~ b,~ c,$ and $n.$ Alternatively, we verify whether the set $\{1, \theta, \theta^{2}, \theta^{3}, \ldots, \theta^{n-1}\}$ is an integral basis  of $K$ or not.
		
		Also, we use the widely recognized formula of Dedekind $D(f)= [\mathcal{O}_{K}:\mathbb{Z}[\theta]]^{2}D_{K}$ to verify the monogenity of the corresponding number field, where $D(f)$ stands for the discriminant of $f(x)$ and $D_{K}$ stands for the discriminant of the number field $K.$ Finally, we provide some examples which state the importance of the given theorems. Throughout the paper, $\bar F(x)$ denotes the operation reduction modulo $p$ for any polynomial $F(x)$ and $D$ denotes the discriminant of the polynomial $$f(x)= x^{n}+ax^{n-1}+bx+c,$$ where $n>4,$ $abc\neq 0.$ The discriminant of $f(x)$ is defined as \cite[Theorem 4.1]{KG}
		\begin{equation}\label{E1}
			\begin{split}
				D&=(-1)^{\frac{(n+2)(n-1)}{2}}\bigg[(n-1)^{n-1}a^{n}c^{n-2}+\frac{n^{2}(n-1)^{n-1}b^{n-1}c}{a}+\frac{(n-1)^{n-3}b^{n-2}(n^{2}c-ab)^{2}}{a^{2}}\\&-\frac{n(n-1)^{n-3}b^{n-2}(n^{2}c-ab)((n-2)ab+cn)}{a^{2}}\\&+\sum_{i=0}^{\lfloor \frac{n-3}{2} \rfloor}\bigg[\frac{2n(n-1)^{2}abc[(2-n)ab-cn]^{n-3-2i}[((n-2)ab+cn)^{2}-4abc(n-1)^{2}]^{i}\binom{n-3}{2i}}{2^{n-3}}\\&+\frac{(n^{2}c-ab)[(2-n)ab-cn]^{n-2-2i}[((n-2)ab+cn)^{2}-4abc(n-1)^{2}]^{i}\binom {n-3}{2i}(n-2)}{2^{n-3}(n-2-2i)}\bigg]\bigg]\\&-(1+(-1)^{n})\frac{(n^{2}c-ab)[((n-2)ab+cn)^{2}-4abc(n-1)^{2}]^\frac{n-2}{2}}{2^{n-2}}.
			\end{split}
		\end{equation}
		Furthermore, we define $ M(x)$ such that 
		\begin{equation}\label {E1}
			M(x)=\frac{1}{p}\bigg[f(x)-\displaystyle\prod_{i=1}^{r}  G_{i}(x)^{e_{i}}\bigg],
		\end{equation}
		where $G_{i}(x)$ are monic lifts of $\bar G_{i}(x)$ given $\bar f(x)=\bar G_{1}(x)^{e_{1}}\bar G_{2}(x)^{e_{2}}\ldots\bar G_{r}(x)^{e_{r}},$ and $e_{i}\in \N,$ for all positive integers $i\in \{1, 2,\ldots, r\}.$
		In this regard, we have the following theorem.
		\begin{theorem}\label{th1}
			Let $\theta$ be an algebraic integer, $n>4$ be a positive integer, and $$f(x)= x^{n}+ax^{n-1}+bx+c$$ be the minimal polynomial of $\theta$ over $\mathbb{Q},$ where $abc\neq0,$ $n^{2}=ak,$ $\gcd(a,k)=1,$ $k \in \mathbb{N}.$ Let $K=\mathbb{Q}(\theta)$ and $\mathcal{O}_{K}$ be the ring of algebraic integers of $K.$ A prime factor $p$ of the discriminant $D$ of $f(x)$ does not divide $[\mathcal{O}_{K}:\mathbb{Z}[\theta]]$ if and only if $p$ satisfies one of the following conditions:  
			\begin{enumerate}
				\item When $p|a,$ $p|b$ and $p|c,$ then $p^{2}\nmid c.$
				\item When $p | a,$ $p | b,$ $p\nmid c,$ then either\\
				(i) $p^{2}|b$ and $p\nmid c_{1}$\\
				or\\
				(ii) $p\nmid b_{1}[(-c_{1})^{n}+c(b_{1})^{n}],$ where $b=pb_{1},$ $c_{1}=\frac{(c+(-c)^{p^{r}})}{p},$ and $p^{r}||n.$
				
				\item When $p\nmid a,$ $p|b$ and $p|c,$ then $p^{2}\nmid c$ and $p^{2}\nmid (ab-c).$
				%
				\item When $p\nmid ab$ and $p|c,$ 
				then one of the following conditions is satisfied:\\
				(i) $p|(n-1)$\\
				(ii) If $p\nmid(n-1),$ then either $a(-a(n-2))^{n-2}+b(n-1)^{n-1}\not\equiv 0\pmod{p}$\\ or\\ if $ a(-a(n-2))^{n-2}+b(n-1)^{n-1}\equiv 0\pmod{p},$ then $(x-\bar n_{1})\nmid \bar M(x),$ where 
				$\bar n_{1}=-\overline{(n-1)}^{-1}\bar a~ \overline{(n-2)}.$ 
				\item When $p\nmid ac$ and $p|b,$ then one of the following conditions is satisfied:\\
				(i) $p|n$ \\
				(ii) If $p\nmid n,$ then either $cn^{n}+a(-a(n-1))^{n-1}\not\equiv 0\pmod{p}$\\ or\\ if $cn^{n}+a(-a(n-1))^{n-1}\equiv 0\pmod{p},$ then $(x-\bar n_{2})\nmid \bar M(x),$ where $\bar n_{2}=-(\overline{n})^{-1}\bar a~ \overline{(n-1)}.$ 
				
				\item When $p\nmid abc,$ then one of the following conditions is satisfied:\\
				(I) If $p$ is an odd prime, then one of the following conditions is satisfied:
				
				(i) If $p|(n-1),$ then either $ab\not\equiv c\pmod{p}$ or if $ab\equiv c\pmod{p},$ then either $p|v_{1}$ and $p\nmid v_{0}$ or $p\nmid v_{1}[(- v_{0})^{n}+ {a} v_{1}(- v_{0})^{n-1}- b ( v_{1})^{n-1} v_{0}+ {c}( v_{1})^{n}],$ where $v_{1}=\frac{(b+(-b)^{p^{r_0}})}{p}$ and $v_{0}=\frac{(c+a(-b)^{p^{r}})}{p}.$ \\
				(ii) If $p|n,$ then either $$(-\bar a\pm [(\bar a)^{2}-\bar c\bar a(\bar b)^{-1}]^{\frac {1}{2}})^{n-2}\bar a \neq \bar b$$ or $x^{2}+2\bar ax+\bar a\bar c(\bar b)^{-1}$ is co-prime to $\bar M(x).$ 
				
				(iii) If $p\nmid n(n-1),$ then either $$(l_{1})^{n-2}(\bar {n}l_{1}+2\bar a ~\overline {(n-1)})+2^{n-1}\bar {b}\neq \bar 0$$ with 
				\begin{equation*}
					\begin{split}
						l_{1}&=\{-(\bar b-\bar n\bar b)^{-1}(\bar a\bar b-\overline{(n-1)}\bar a \bar b-\bar c\bar n)\\&\pm[((\bar b-\bar n\bar b)^{-1}(\bar a\bar b-\overline{(n-1)}\bar a \bar b-\bar c\bar n))^{2}+4(\bar b-\bar n\bar b)^{-1}\overline{(n-1)}\bar a \bar c]^\frac{1}{2}\}
					\end{split}
				\end{equation*}
				
				or $$x^{2}+x(\bar b-\bar n\bar b)^{-1}(\bar a\bar b-\overline{(n-1)}\bar a \bar b-\bar c\bar n)-(\bar b-\bar n\bar b)^{-1}\overline{(n-1)}\bar a \bar c$$ is co-prime to $\bar M(x).$

				(II) If $p$ is an even prime, then one of the following conditions is satisfied:\\
				(i) If $p=2$ and $n$ is even, then $(x+1)\nmid \bar M(x).$ 
				
				(ii) If $p=2,$ $n$ is odd then 
				exactly one of the elements in the set $\{ \frac{b+1}{2},~  \frac{a+c}{2}\}$ is divisible by two.

			\end{enumerate}
		\end{theorem}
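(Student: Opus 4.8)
The entire proof runs through the Dedekind criterion: a prime $p$ divides the index $[\mathcal{O}_K:\Z[\theta]]$ precisely when, writing $\bar f(x)=\prod_i \bar G_i(x)^{e_i}$ over $\F_p$ with $M(x)$ as in \eqref{E1}, the polynomials $\bar M(x)$ and $\prod_{e_i\ge 2}\bar G_i(x)$ fail to be coprime in $\F_p[x]$; equivalently, $p\nmid[\mathcal{O}_K:\Z[\theta]]$ iff $\gcd\bigl(\bar M(x),\ \prod_{e_i\ge 2}\bar G_i(x),\ \bar f(x)/\prod \bar G_i(x)\bigr)=1$. So the plan is: (1) for each of the six arithmetic regimes on $(v_p(a),v_p(b),v_p(c))$ listed in the statement, determine the factorization type of $\bar f(x)$ over $\F_p$ — in particular, identify which irreducible factors are repeated; (2) for each repeated factor $\bar G_i$, decide whether it divides $\bar M(x)$, reducing this to an explicit congruence in $a,b,c,n$; (3) collect these congruence conditions into the stated list. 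Throughout I would use the hypothesis $n^2=ak$ with $\gcd(a,k)=1$ to control $v_p(n)$ versus $v_p(a)$: if $p\mid a$ then $p^{2r}\| n^2 \cdot$ (unit), forcing $v_p(a)=2v_p(n)$ unless... — more precisely $p^r\|n$ gives $p^{2r}\|n^2=ak$ and $\gcd(a,k)=1$ forces $p^{2r}\|a$, which is exactly why the Frobenius-type quantities $c_1=(c+(-c)^{p^r})/p$, $v_1=(b+(-b)^{p^{r_0}})/p$ appear.

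For the generic step I would proceed case by case. \textbf{When $p\mid a,b,c$:} then $\bar f(x)=x^n$, the only repeated factor is $x$, and $M(x)=\tfrac1p(ax^{n-1}+bx+c)$, so $x\mid\bar M(x)$ iff $p\mid c/p$, i.e. $p^2\mid c$ — giving condition (1). \textbf{When $p\mid a$, $p\mid b$, $p\nmid c$:} here $\bar f(x)=x^n+\bar c$; writing $n=p^r m$ with $p\nmid m$, one has $x^n+\bar c=(x^{p^r}+\bar c_0)^m$ in characteristic $p$ where $\bar c_0$ is the unique $p^r$-th root, and $x^{p^r}+\bar c_0$ may itself factor — but the repeated structure is governed by $m$, and a short computation with $M(x)$ (using $v_p(a)=2v_p(n)\ge 2r\ge 2$, so the $ax^{n-1}$ term contributes nothing mod $p$ to the relevant coefficients) reduces coprimality to the two alternatives in (2), the split depending on whether $m>1$ (then need $p^2\mid b$, $p\nmid c_1$) or $m=1$ (then the single factor $x^{p^r}+\bar c_0$ is checked against $\bar M$, yielding the $p\nmid b_1[(-c_1)^n+c\,b_1^{\,n}]$ condition). \textbf{When $p\nmid a$ but $p\mid b,c$ (resp. $p\mid b$, resp. $p\mid c$):} $\bar f(x)=x^{n-1}(x+\bar a)$ or $x^n+\bar a x^{n-1}+\bar b x = x(x^{n-1}+\bar a x^{n-2}+\bar b)$ etc.; the repeated factor is $x$ (and possibly $x+\bar a$ when $p\mid ab-c$), and again $x\mid\bar M$ or $(x+\bar a)\mid\bar M$ unwinds to $p^2\mid c$, $p^2\mid(ab-c)$ as in (3), while the conditions on $p\mid n-1$, $p\mid n$ in (4),(5) come from whether $\bar f$ has a repeated root at the unique candidate $\bar n_1=-\overline{(n-1)}^{-1}\bar a\,\overline{(n-2)}$ resp. $\bar n_2=-\bar n^{-1}\bar a\,\overline{(n-1)}$ — these being the common roots of $\bar f$ and $\bar f'$ located by solving the two-term resultant, with the displayed polynomials $a(-a(n-2))^{n-2}+b(n-1)^{n-1}$ etc. being exactly the obstruction to such a common root existing. \textbf{When $p\nmid abc$:} $\bar f$ is separable away from where $\gcd(\bar f,\bar f')\ne 1$; one computes $\bar f'(x)=nx^{n-1}+(n-1)ax^{n-2}+b$ and finds common roots — for $p$ odd these cluster at at most two values (roots of a quadratic, whence the $l_1$ and the $x^2+\cdots$ expressions in (6)(I)(iii), degenerating when $p\mid n$ or $p\mid n-1$), and for $p=2$ the parity analysis of $(x+1)$ dividing things gives (6)(II).

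The main obstacle — and the bulk of the work — is the case $p\nmid abc$ with $p$ odd: here $\bar f$ can have a genuinely quadratic repeated factor, so one must (a) solve $\bar f(x)=\bar f'(x)=0$ simultaneously, which after eliminating the top-degree term via $x\bar f'(x)-n\bar f(x)$ reduces to locating roots of an explicit quadratic $Q(x)$ whose coefficients involve $(\bar b-\bar n\bar b)^{-1}$ — hence the need to split off $p\mid n$ and $p\mid n-1$ where that inverse disappears — and then (b) for each such root $\alpha$, with multiplicity $2$ in $\bar f$, test $\bar G=x-\alpha$ (or the minimal polynomial of $\alpha$ over $\F_p$, when $\alpha\notin\F_p$, which is why the criterion is phrased as "$Q(x)$ coprime to $\bar M(x)$" rather than a root condition) against $\bar M(x)$. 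Verifying that $\alpha$ has multiplicity \emph{exactly} two (so that $\bar G^2\,\|\,\bar f$ and the Dedekind test is simply $\bar G\nmid\bar M$) requires checking $\bar f''(\alpha)\ne 0$, i.e. another non-vanishing condition absorbed into the case hypotheses; I expect the sub-case bookkeeping for $p=3$ and small $n\bmod p$ to be the fiddliest part. The remaining cases are comparatively mechanical once the factorization of $\bar f$ is pinned down, so I would present (1)–(3) quickly and devote the proof's length to (4)–(6).
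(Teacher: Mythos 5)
Your overall strategy --- Dedekind's criterion, case split on which of $a,b,c$ the prime divides, factor $\bar f$ over $\F_p$, test repeated factors against $\bar M$ --- is exactly the paper's, and your treatment of cases (1), (3), (4), (5) and of the quadratic elimination $x\bar f'(x)-n\bar f(x)$ in case (6) matches the paper's proof in substance. But there is a genuine error in your case (2). The correct identity in characteristic $p$ is the Frobenius one, $x^n+\bar c=(x^m+\bar c)^{p^r}$ where $n=p^rm$, $p\nmid m$ (using $c^{p^r}\equiv c$), not $(x^{p^r}+\bar c_0)^m$ as you wrote --- the latter is false for $m>1$ since the $m$-th power does not distribute over the sum. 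Consequently every irreducible factor of $\bar f$ is repeated with multiplicity (a multiple of) $p^r\ge p$, regardless of whether $m=1$ or $m>1$, and your proposed dichotomy between conditions (2)(i) and (2)(ii) according to the size of $m$ is wrong. The actual dichotomy comes from the linear remainder: one computes $\bar M(x)=\bigl(\prod_j\bar g_j(x)\bigr)\bar V(x)+\bar b_1x+\bar c_1$ with $b=pb_1$, $c_1=(c+(-c)^{p^r})/p$, which requires the expansion lemma of Jakhar--Khanduja--Sangwan (Lemma \ref{L3}) to control the $p$-adic carry terms in $(\prod g_j+pU)^{p^r}$; condition (i) is the degenerate case where $\bar b_1x+\bar c_1$ is a nonzero constant, and (ii) is the case where the linear polynomial shares no root with $x^n+\bar c$. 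Without this computation of $\bar M$ the stated quantities $b_1,c_1$ cannot be derived.

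The same gap recurs in case (6)(I)(i): when $p\mid(n-1)$ and $ab\equiv c\pmod p$, the factorization is $\bar f=(x+\bar a)(x^{m_0}+\bar b)^{p^{r_0}}$ with $n-1=p^{r_0}m_0$, so the repeated part is the full radical of $x^{m_0}+\bar b$ --- many irreducible factors of arbitrary degree, not "at most two values" clustering at roots of a quadratic. Your quadratic-resultant picture simply does not apply here (it presupposes $\bar n\ne\bar 0\ne\overline{n-1}$), and the quantities $v_0,v_1$ in the theorem again arise from the linear remainder of $\bar M$ modulo the radical, via the same Frobenius-lifting computation as in case (2). A smaller point: in case (3), $x+\bar a$ is never a repeated factor of $x^{n-1}(x+\bar a)$ when $p\nmid a$, so your parenthetical "possibly $x+\bar a$ when $p\mid ab-c$" is not how the condition $p^{2}\nmid(ab-c)$ enters; the paper obtains it by testing $x+\bar a$ against $\bar M(x)=\overline{(b/p)}x+\overline{(c/p)}$.
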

		
		As a consequence of the theorem, we have the following important corollary. 
		\begin{corollary}\label{C1}
			Let $K=\mathbb{Q}(\theta)$ be a number field corresponding to the minimal polynomial $f(x)= x^{n}+ax^{n-1}+bx+c\in\mathbb{Z}[x]$ of $\theta.$ Then, $\mathcal{O}_K=\Z[\theta]$ if and only if for each prime $p$ dividing the discriminant $D$ of $f(x),$ satisfies one of the conditions (1) to (8) of the Theorem (\ref{th1}).
			
		\end{corollary}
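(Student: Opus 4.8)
My plan is to derive Corollary \ref{C1} as a formal consequence of Theorem \ref{th1} together with Dedekind's identity $D(f)=[\mathcal{O}_{K}:\mathbb{Z}[\theta]]^{2}D_{K}$, where $D_{K}\in\mathbb{Z}$ is the discriminant of $K$ and $D=D(f)$ is the discriminant of $f$. The first step is to record that any rational prime $p$ dividing the index $[\mathcal{O}_{K}:\mathbb{Z}[\theta]]$ satisfies $p^{2}\mid[\mathcal{O}_{K}:\mathbb{Z}[\theta]]^{2}$, hence $p^{2}\mid D$; in particular every prime divisor of the index lies in the finite set of prime divisors of $D$. Consequently, $\mathcal{O}_{K}=\mathbb{Z}[\theta]$ if and only if $[\mathcal{O}_{K}:\mathbb{Z}[\theta]]=1$, if and only if no rational prime divides the index, if and only if no prime divisor $p$ of $D$ divides $[\mathcal{O}_{K}:\mathbb{Z}[\theta]]$.

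Next I would invoke Theorem \ref{th1}, whose hypotheses are in force here: it asserts precisely that for a prime $p\mid D$ one has $p\nmid[\mathcal{O}_{K}:\mathbb{Z}[\theta]]$ if and only if $p$ satisfies one of the conditions (1)--(8) of that theorem. Substituting this equivalence into the reformulation from the first paragraph yields: $\mathcal{O}_{K}=\mathbb{Z}[\theta]$ if and only if every prime $p$ dividing $D$ satisfies one of the conditions (1)--(8) of Theorem \ref{th1}. This is exactly the statement of the corollary, so the proof is complete.

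Since essentially all of the substance is already carried by Theorem \ref{th1}, I do not expect a serious obstacle in this corollary; the only point needing a line of care is that the case division in Theorem \ref{th1}, which is organized according to the divisibility of $p$ into $a$, $b$, and $c$, exhausts every prime $p\mid D$, so that ``$p$ satisfies none of (1)--(8)'' is genuinely the negation of ``$p\nmid[\mathcal{O}_{K}:\mathbb{Z}[\theta]]$''. That exhaustiveness is built into the statement of Theorem \ref{th1}, so nothing further is required, and the corollary follows immediately from the index--discriminant formula. (All the genuine work — the Dedekind-criterion computations behind the eight cases — belongs to the proof of Theorem \ref{th1}, not to this deduction.)
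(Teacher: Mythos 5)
Your proposal is correct and follows essentially the same route as the paper: both deduce the corollary from Theorem \ref{th1} together with the identity $D=[\mathcal{O}_{K}:\mathbb{Z}[\theta]]^{2}D_{K}$, which forces every prime divisor of the index to divide $D$, so that $[\mathcal{O}_{K}:\mathbb{Z}[\theta]]=1$ exactly when no prime divisor of $D$ divides the index. Your version is in fact slightly more careful than the paper's, since you make the quantifier reversal and the exhaustiveness of the case division explicit.
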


		A generalization of Theorem (\ref {th1}) has been established in the further upcoming paper \cite {CK2}.
		Here, we recure the following proposition which characterizes some prime factors $p$ of $D_{K}.$    
		\begin{proposition} \label{P1}
			Let $\theta$ be an algebraic integer, $n>4$ be a positive integer,  and $$f(x)= x^{n}+ax^{n-1}+bx+c$$ be the minimal polynomial of $\theta$ over $\mathbb{Q},$ where $n^{2}=ak,$ $k \in \mathbb{N},$ and $abc\neq 0.$ Let $K=\mathbb{Q}(\theta)$ and $\mathcal{O}_{K}$ be the ring of algebraic integers of $K.$
			Let $p$ be an odd prime satisfying the following conditions,
			$p \nmid a,$ $p \nmid b,$ $p| c$ and $p|(n-1).$ Then, $p |D_{K}$ if and only if $$
			\left\{\begin{array}{ll}
				\displaystyle\sum_{i=0}^{\lfloor \frac{n-3}{2} \rfloor}\binom{n-3}{2i}\left[\frac{n-2}{n-2-2i}\right] \equiv 1\pmod{p}, & {\rm if }\; n \text{~is~even}\\
				\displaystyle\sum_{i=0}^{\lfloor \frac{n-3}{2} \rfloor}\binom{n-3}{2i}\left[\frac{n-2}{n-2-2i}\right]\equiv 0\pmod{p}, & {\rm if}\;n~\text{is~ odd.} 
			\end{array}\right.$$ 
		\end{proposition}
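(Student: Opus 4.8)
The plan is to turn the question about $D_K$ into a congruence for the polynomial discriminant $D$, and then to read that congruence off the explicit formula for $D$.

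\textbf{Step 1: reduce to $D$.} The hypotheses $p\nmid ab$, $p\mid c$, $p\mid(n-1)$ are exactly those of case~(4)(i) of Theorem~\ref{th1}, so $p\nmid[\mathcal{O}_K:\mathbb{Z}[\theta]]$ (trivially so if $p\nmid D$). Inserting this into Dedekind's identity $D=[\mathcal{O}_K:\mathbb{Z}[\theta]]^2 D_K$ shows that $p$ divides $D$ and $D_K$ to the same power, so $p\mid D_K$ if and only if $p\mid D$. It therefore suffices to decide when $p\mid D$.

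\textbf{Step 2: reduce $D$ modulo $p$.} I would substitute $n\equiv 1\pmod p$ (hence $n-1\equiv 0$, $n-2\equiv -1$, $2-n\equiv 1$, $n^2\equiv 1$) and $c\equiv 0\pmod p$ into the formula for $D$. Because $n>4$ gives $n-3\ge 2$, each of the first four terms inside the outer bracket carries a factor $(n-1)^{n-1}$ or $(n-1)^{n-3}$, and the first term of the inner sum carries the factor $(n-1)^2 c$; all of these are $\equiv 0\pmod p$. What remains is $\sum_{i=0}^{\lfloor(n-3)/2\rfloor}$ of the second term of the inner sum, together with the trailing term $(1+(-1)^n)(\cdots)$. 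For the former, the key algebraic input is the identity
\[
\binom{n-3}{2i}\,\frac{n-2}{n-2-2i}=\binom{n-2}{2i},
\]
which simultaneously identifies the integer $\binom{n-3}{2i}\bigl[\frac{n-2}{n-2-2i}\bigr]$ appearing in the statement and disposes of the apparent pole at those indices $i$ with $p\mid(n-2-2i)$. With the reductions $n^2 c-ab\equiv -ab$, $(2-n)ab-cn\equiv ab$ and $((n-2)ab+cn)^2-4abc(n-1)^2\equiv(ab)^2$ modulo $p$, the $i$-th surviving term equals $-(ab)^{n-1}\binom{n-2}{2i}/2^{n-3}$, so the outer bracket reduces modulo $p$ to $-\frac{(ab)^{n-1}}{2^{n-3}}\sum_{i=0}^{\lfloor(n-3)/2\rfloor}\binom{n-2}{2i}$. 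The trailing term vanishes for $n$ odd, and for $n$ even the same reductions evaluate it modulo $p$ as $\pm(ab)^{n-1}/2^{n-3}$.

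\textbf{Step 3: conclude.} Combining the two contributions, $D$ is congruent modulo $p$ to a unit times $\bigl(\sum_{i=0}^{\lfloor(n-3)/2\rfloor}\binom{n-2}{2i}-\varepsilon_n\bigr)$, where $\varepsilon_n=1$ if $n$ is even and $\varepsilon_n=0$ if $n$ is odd (the nonzero value for $n$ even coming from the trailing term of the formula), and the unit equals $\pm(ab)^{n-1}2^{-(n-3)}$, which is invertible modulo $p$ because $p$ is odd and $p\nmid ab$. Hence $p\mid D$ exactly when $\sum_{i=0}^{\lfloor(n-3)/2\rfloor}\binom{n-2}{2i}\equiv\varepsilon_n\pmod p$; rewriting $\binom{n-2}{2i}=\binom{n-3}{2i}\bigl[\frac{n-2}{n-2-2i}\bigr]$ gives the two displayed congruences, and Step~1 converts this into the statement about $D_K$. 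I expect the bulk of the effort to be the bookkeeping in Step~2 --- sorting out which of the many summands of the discriminant formula die modulo $p$, and then tracking the powers of $2$, the global sign $(-1)^{(n+2)(n-1)/2}$, and the parity-dependent trailing term. The one genuinely non-mechanical ingredient is the binomial identity above: it collapses the residue of $D$ to a single binomial sum and, at the same time, makes the bracketed quantity in the statement a well-defined integer even when $p\mid(n-2-2i)$.
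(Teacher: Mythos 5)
Your proposal follows essentially the same route as the paper's own proof: reduce to $p\mid D$ via part (4) of Theorem \ref{th1} and the identity $D=[\mathcal{O}_{K}:\mathbb{Z}[\theta]]^{2}D_{K}$, then reduce the explicit discriminant formula modulo $p$ using $p\mid c$, $p\mid(n-1)$, $p\nmid ab$, so that only the second summand of the inner sum and the parity-dependent trailing term survive, yielding the stated congruence up to the unit $\pm(ab)^{n-1}((n-2))^{n-2}2^{-(n-2)}$. Your explicit identity $\binom{n-3}{2i}\frac{n-2}{n-2-2i}=\binom{n-2}{2i}$ is a nice touch (the paper only asserts integrality of that ratio), and the one piece of bookkeeping you defer --- whether the trailing term sits inside or outside the globally signed bracket, which affects the sign for $n\equiv 0\pmod 4$ --- is resolved in the paper by taking it inside, as in its equation for $D$ after the substitution $n^{2}=ak$.
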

		From the structure of $D,$ it is difficult to factorize it for large coefficients of associated polynomials $f(x).$ But the following proposition provides us certain conditions by which we can generate a list of primes $p$ such that $p\nmid D$ as well as $p\nmid [\mathcal{O}_{K}:\mathbb{Z}[\theta]],$ for $n^{2}=ak.$ 
		\begin{proposition} \label{T2}
			Let $\theta$ be an algebraic integer and $$f(x)= x^{n}+ax^{n-1}+bx+c$$ be the minimal polynomial of $\theta$ over $\mathbb{Q}$ with $n^{2}=ak,$ $k \in \mathbb{N},$ $n>4,$ and $abc\neq 0.$ Let $K=\mathbb{Q}(\theta)$ and $\mathcal{O}_{K}$ be the ring of algebraic integers of $K.$ An odd prime $p$ does not divide $[\mathcal{O}_{K}:\mathbb{Z}[\theta]]$ as well as $D,$ if it satisfies one of the following conditions:
			\begin{enumerate}
				\item  $p\nmid a,$ $p\nmid c ,$ $p|b$ and $p|n.$ 
				\item  $p\nmid a,$ $p\nmid b,$ $p|c$ and $p|(n-2).$
			\end{enumerate}
		\end{proposition}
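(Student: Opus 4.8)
The plan is to reduce the whole statement to the single claim $p\nmid D$; granting this, Dedekind's formula $D=[\mathcal{O}_{K}:\Z[\theta]]^{2}D_{K}$ together with $D_{K}\in\Z$ immediately yields $p\nmid[\mathcal{O}_{K}:\Z[\theta]]$, so both assertions follow at once. To prove $p\nmid D$ I would use that, $f$ being monic of degree $n$, one has $D=(-1)^{n(n-1)/2}\,\mathrm{Res}(f,f')$, and that reduction modulo $p$ commutes with the resultant; since the leading coefficient of $\bar f$ is the unit $1$ in $\F_{p}$, this gives the standard equivalence: $p\nmid D$ holds if and only if $\bar f$ is separable over $\F_{p}$, equivalently $\gcd(\bar f,\bar f')=1$ in $\F_{p}[x]$. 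Thus the work in each case becomes: write down $\bar f$ and $\bar f'$ under the stated divisibilities and check the gcd is $1$. (A longer alternative would be to substitute the same congruences directly into the explicit expression for $D$ and verify term by term that every summand vanishes modulo $p$ except a single unit term --- $\pm(n-1)^{n-1}a^{n}c^{n-2}$ in case (1), resp. $\pm(n-1)^{n-3}b^{n}$ in case (2); I would instead take the separability route.)

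For case (1), with $p\nmid a$, $p\nmid c$, $p\mid b$, $p\mid n$, I compute $\bar f(x)=x^{n}+\bar a\,x^{n-1}+\bar c$ and, using $\bar n=0$, $\overline{(n-1)}=-1$, $\bar b=0$,
\[
\bar f'(x)=\bar n\,x^{n-1}+\overline{(n-1)}\,\bar a\,x^{n-2}+\bar b=-\bar a\,x^{n-2}.
\]
Since $p\nmid a$ this is a nonzero monomial of degree $n-2\ge3$, whose only root is $0$; but $\bar f(0)=\bar c\neq0$. Hence $\gcd(\bar f,\bar f')=1$ and $p\nmid D$.

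For case (2), with $p\nmid a$, $p\nmid b$, $p\mid c$, $p\mid(n-2)$, note first that $n\equiv2\pmod p$, hence $\overline{(n-1)}=1$ and $\overline{(n-2)}=0$. Then $\bar f(x)=x^{n}+\bar a\,x^{n-1}+\bar b\,x=x\,g(x)$ with $g(x)=x^{n-1}+\bar a\,x^{n-2}+\bar b$, and since $g(0)=\bar b\neq0$ the root $0$ of $\bar f$ is simple and is not a root of $g$, so it suffices to show $g$ is separable. Now
\[
g'(x)=\overline{(n-1)}\,x^{n-2}+\overline{(n-2)}\,\bar a\,x^{n-3}=x^{n-2},
\]
whose only root is $0$, and $g(0)=\bar b\neq0$; therefore $\gcd(g,g')=1$, $g$ (and hence $\bar f$) is separable, and $p\nmid D$.

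In both cases $p\nmid D$, which via the index formula above also gives $p\nmid[\mathcal{O}_{K}:\Z[\theta]]$, completing the argument. I expect no real obstacle here: the only points needing attention are the residues of $n$, $n-1$, $n-2$ modulo $p$ forced by each hypothesis, and the routine fact identifying divisibility of $D$ by $p$ with non-separability of $\bar f$ over $\F_{p}$. I would also remark that the relation $n^{2}=ak$ is part of the standing setup of the family and is not actually needed in this proof.
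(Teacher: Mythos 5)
Your argument is correct, and it takes a genuinely different route from the paper. The paper proves $p\nmid D$ by substituting $n^{2}=ak$ into the explicit closed-form expression for the discriminant, checking term by term that every summand vanishes modulo $p$ except a single unit term ($\pm(n-1)^{n-1}a^{n}c^{n-2}$ in case (1), $\pm(n-1)^{n-3}b^{n}$ in case (2) --- exactly the terms you anticipated in your parenthetical), and deriving a contradiction from $p\mid D$. You instead use the standard equivalence, valid for monic $f$, between $p\nmid D$ and separability of $\bar f$ over $\F_{p}$, and then the gcd computations in both cases are immediate: in case (1) $\overline{f'}=-\bar a x^{n-2}$ vanishes only at $0$ while $\bar f(0)=\bar c\neq\bar 0$, and in case (2) the cofactor $g(x)=x^{n-1}+\bar a x^{n-2}+\bar b$ of the simple root $0$ has $g'(x)=x^{n-2}$ and $g(0)=\bar b\neq\bar 0$. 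The one point your route leans on --- that $\overline{\mathrm{Res}(f,f')}=\mathrm{Res}(\bar f,\overline{f'})$ even though $\deg\overline{f'}$ drops below $n-1$ --- is handled correctly by your remark that the correction factor is a power of the leading coefficient $\bar 1$ of $\bar f$. Your approach buys a much shorter verification that avoids the unwieldy discriminant formula entirely, and, as you note, it dispenses with the hypothesis $n^{2}=ak$, which the paper's proof uses only to simplify that formula; the paper's computation, on the other hand, has the side benefit of exhibiting the exact residue of $D$ modulo $p$, which is in the spirit of its Proposition on $p\mid D_{K}$.
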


		\section{\bf Notations and Preliminaries}
		In this section, we define some basic notations and preliminaries.
		Let $p$ be any prime. For any integer $m$ such that $p\nmid m,$ then $(\bar m)^{-1}$ denotes the inverse of $\bar m$ in the finite field $\F_{p}=\mathbb{Z}/p\mathbb{Z}.$
		We need the following results that play a crucial role in the proof of Theorem (\ref {th1}). 
		\begin{lemma}\label{L1}
			Let $n,$ $a,$ $k\in \mathbb{N},$ where $n^{2}=ak$ and $\gcd(a, k)=1.$ If a prime $p | a,$ then $p^{2}|a.$
		\end{lemma}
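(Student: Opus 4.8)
The plan is to argue entirely by elementary divisibility, exploiting the hypothesis $\gcd(a,k)=1$ to "push" a prime from $a$ into a square inside $a$. The starting observation is that the relation $n^{2}=ak$ makes $a$ a divisor of $n^{2}$, so any prime $p\mid a$ also divides $n^{2}$, and since $p$ is prime this forces $p\mid n$.

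From $p\mid n$ I would immediately get $p^{2}\mid n^{2}=ak$. Now I invoke the coprimality: since $p\mid a$ and $\gcd(a,k)=1$, we have $p\nmid k$, and as $p$ is prime this gives $\gcd(p^{2},k)=1$. Combining $p^{2}\mid ak$ with $\gcd(p^{2},k)=1$ yields $p^{2}\mid a$, which is exactly the claim. One can phrase the last implication via the standard fact that if $d\mid xy$ and $\gcd(d,y)=1$ then $d\mid x$, applied with $d=p^{2}$, $x=a$, $y=k$.

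There is essentially no genuine obstacle here; the only point that needs a little care is the logical order of the coprimality deduction, namely that one must first use $p\mid a$ together with $\gcd(a,k)=1$ to conclude $p\nmid k$, and only then upgrade this to $\gcd(p^{2},k)=1$ before applying the coprime-divisor lemma. I would present the argument in four short lines following the sequence: (i) $a\mid n^{2}$ hence $p\mid n$; (ii) $p^{2}\mid n^{2}=ak$; (iii) $\gcd(p^{2},k)=1$; (iv) therefore $p^{2}\mid a$.
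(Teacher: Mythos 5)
Your argument is correct and is essentially the same as the paper's: both deduce $p\mid n$ from $p\mid n^{2}$, then $p^{2}\mid n^{2}=ak$, and finally use $\gcd(a,k)=1$ to conclude $p^{2}\mid a$. Your version merely makes explicit the intermediate step $p\nmid k$, hence $\gcd(p^{2},k)=1$, which the paper leaves implicit.
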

		\begin{proof}
			Let $p|a,$ then $p|n^{2}.$ Since, $p$ is a prime number implies that $p$ divides $n,$ it follows that $p^{2} | n^{2}$ i.e. $p^{2}$ divides $ak.$ Thus, the coprimality of $a$ and $k$ gives us $p^{2} | a.$
		\end{proof}
		
		In 1878, Dedekind introduced the following criterion known as the Dedekind criterion  (\cite[Theorem 6.1.4]{HC}, \cite{RD}), which provides necessary and sufficient conditions to be satisfied by $f(x)$ so that a prime number $p$ does not divide the index $[\mathcal{O}_{K}:\mathbb{Z}[\theta]].$
		\begin{theorem}\label{T4}
			(Dedekind Criterion) Let $\theta$ be an algebraic integer and $f(x)$ be the minimal polynomial of $\theta$ over $\mathbb{Q}.$ Let $K=\mathbb{Q}(\theta)$ be the corresponding number field. Let $p$ be a prime and $$\bar{f}(x)=\bar {f}_1(x)^{a_1}\bar {f}_2(x)^{a_2}\cdots\bar {f}_t(x)^{a_t}$$ be the factorization of $\bar{f}(x)$ as a product of powers of distinct monic irreducible polynomials over the field $\F_{p}.$ Let $M(x)$ be the polynomial defined as $$M(x) = \frac{1}{p}(f(x)-{f}_1(x)^{a_1} {f}_2(x)^{a_2} \cdots {f}_t(x)^{a_t})\in\mathbb{Z}[x],$$  where $f_i(x)\in \mathbb{Z}[x]$ are monic lifts of $\bar f_i(x),$ for all $i=1,~2,\cdots,t.$ Then, $p \nmid [\mathcal{O}_{K}:\mathbb{Z}[\theta]]$ if and only if for each $i,$ we have either $a_i=1$ or $\bar{f_i}(x)$ does not divide $\bar{M}(x).$  
		\end{theorem}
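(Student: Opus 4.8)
The plan is to establish both non-divisibility statements at once by showing that each of the two stated conditions forces the reduction $\bar f(x)\in\F_p[x]$ to be separable. Since $f$ is monic, the reduction modulo $p$ of its discriminant $D$ equals the discriminant of the monic polynomial $\bar f(x)$, and this reduction is nonzero in $\F_p$ precisely when $\bar f$ has no repeated root in $\overline{\F_p}$, i.e. precisely when $\gcd(\bar f,\bar f')=1$. Thus $p\nmid D$ as soon as $\bar f$ is separable; and since the Dedekind formula $D=[\mathcal{O}_K:\Z[\theta]]^{2}D_K$ gives $[\mathcal{O}_K:\Z[\theta]]^{2}\mid D$, the separability of $\bar f$ also yields $p\nmid[\mathcal{O}_K:\Z[\theta]]$. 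In this way the entire proposition reduces to a separability check on $\bar f$, so I never need to expand the explicit discriminant formula displayed above.

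For the first condition ($p\nmid a$, $p\nmid c$, $p\mid b$, $p\mid n$), I would reduce $f(x)=x^{n}+ax^{n-1}+bx+c$ together with $f'(x)=nx^{n-1}+(n-1)ax^{n-2}+b$ modulo $p$. Here $p\mid n$ gives $\bar n=0$ and hence $\overline{n-1}=-1$, while $p\mid b$ gives $\bar b=0$, so the derivative collapses to $\bar f'(x)=-\bar a\,x^{n-2}$, a nonzero scalar multiple of $x^{n-2}$ because $p\nmid a$. Every common root of $\bar f$ and $\bar f'$ in $\overline{\F_p}$ is therefore forced to be $x=0$; but $\bar f(0)=\bar c\neq 0$ since $p\nmid c$. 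Hence $\gcd(\bar f,\bar f')=1$ and $\bar f$ is separable.

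For the second condition ($p\nmid a$, $p\nmid b$, $p\mid c$, $p\mid(n-2)$), I would first use $p\mid c$ to factor $\bar f(x)=x\,g(x)$ with $g(x)=x^{n-1}+\bar a\,x^{n-2}+\bar b$. Now $p\mid(n-2)$ gives $\bar n=2$, whence $\overline{n-1}=1$ and $\overline{n-2}=0$, so the derivative of the cofactor simplifies to $g'(x)=\overline{(n-1)}\,x^{n-2}+\overline{(n-2)}\,\bar a\,x^{n-3}=x^{n-2}$. Since $g(0)=\bar b\neq 0$, the only candidate common root of $g$ and $g'$, namely $x=0$, is excluded, so $g$ is separable; the same relation $g(0)=\bar b\neq 0$ shows $x\nmid g$, so the two factors $x$ and $g$ are coprime. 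Therefore $\bar f=x\,g$ is squarefree, hence separable, and the reduction from the first paragraph gives $p\nmid D$ and $p\nmid[\mathcal{O}_K:\Z[\theta]]$ in both cases.

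The argument is largely routine once it is framed through separability; the one point needing care is the second condition, where $\bar f$ is not separable ``on the nose'' but carries the simple factor $x$ produced by $p\mid c$, so one must peel off this factor and argue separability of the degree-$(n-1)$ cofactor $g$ and its coprimality to $x$ separately rather than computing $\gcd(\bar f,\bar f')$ in one stroke. I expect the only real (and mild) obstacle to be the bookkeeping: correctly reducing the coefficients $n$, $n-1$, $n-2$ modulo $p$ under each hypothesis and confirming that $\bar f'$ does not vanish identically so that the gcd criterion applies. The standing relation $n^{2}=ak$ plays no role here beyond fixing the ambient setup.
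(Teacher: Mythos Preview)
Your proposal does not address the stated theorem. Theorem~\ref{T4} is the classical Dedekind Criterion, a general result characterizing when a prime $p$ divides $[\mathcal{O}_K:\Z[\theta]]$ in terms of the factorization of $\bar f$ modulo $p$ and the auxiliary polynomial $M(x)$; the paper does not prove it but merely quotes it with references. What you have actually written is a proof of Proposition~\ref{T2}: that under each of the two listed divisibility hypotheses on $p,a,b,c,n$ one has $p\nmid D$ and $p\nmid[\mathcal{O}_K:\Z[\theta]]$. Nothing in your argument touches the equivalence in the Dedekind Criterion itself.

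If Proposition~\ref{T2} was your intended target, then your argument is correct and follows a genuinely different route from the paper's. The paper substitutes $n^{2}=ak$ into the explicit discriminant expression (equation~(\ref{eq40})), reduces modulo $p$, and argues by contradiction that a single surviving term cannot vanish. You bypass the discriminant formula entirely: you show directly that $\bar f$ (or, in case~(2), its factorization $x\cdot g(x)$) is squarefree over $\F_p$, whence $D\bmod p\neq 0$ and the index conclusion follows from $D=[\mathcal{O}_K:\Z[\theta]]^{2}D_K$. Your approach is shorter, more conceptual, and---as you note---does not use the relation $n^{2}=ak$ at all, so it in fact establishes a strictly stronger statement. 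The paper's computation has the incidental benefit of exercising its discriminant formula, but your separability argument is the more natural one.
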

		
		The following lemma is helpful to prove our main result.
		
		\begin{lemma}{\rm \cite{jss}}\label{L3}
			Let $A(x)=x^{l'}+ax^{l}+b\in\mathbb{Z}[x]$ be a polynomial of degree $l'$. Let $p$ be a prime number and $k$ be any natural number. Then, there exist a polynomial $U(x)\in\mathbb{Z}[x]$ such that $$A(x^{p^{k}})=A(x)^{p^{k}}+p*A(x)U(x)+ax^{lp^{k}}+b+(-ax^{l}-b)^{p^{k}}.$$
		\end{lemma}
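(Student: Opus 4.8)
The plan is to recast the asserted identity as a single divisibility statement and then verify it by two independent reductions. Set
\[
R(x):=A(x^{p^{k}})-A(x)^{p^{k}}-ax^{lp^{k}}-b-(-ax^{l}-b)^{p^{k}}.
\]
Since $A$ is monic of degree $l'$, the existence of a $U(x)\in\Z[x]$ with the claimed property is exactly the assertion that $pA(x)$ divides $R(x)$ in $\Z[x]$. I would establish this by proving separately that $A(x)\mid R(x)$ and that $p\mid R(x)$, and then combining the two facts.

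First I would show $A(x)\mid R(x)$ by computing in the quotient ring $\Z[x]/(A(x))$. There $x^{l'}\equiv -ax^{l}-b$, hence $x^{l'p^{k}}=(x^{l'})^{p^{k}}\equiv(-ax^{l}-b)^{p^{k}}$, which gives
\[
A(x^{p^{k}})=x^{l'p^{k}}+ax^{lp^{k}}+b\equiv(-ax^{l}-b)^{p^{k}}+ax^{lp^{k}}+b\pmod{A(x)}.
\]
Because $A(x)^{p^{k}}\equiv 0\pmod{A(x)}$, substituting into the definition of $R(x)$ makes every surviving term cancel, so $R(x)\equiv 0\pmod{A(x)}$. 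As $A$ is monic, polynomial division by $A$ keeps integer coefficients, so $R(x)=A(x)V(x)$ for some $V(x)\in\Z[x]$. I prefer this quotient-ring computation to a root-by-root argument precisely because it needs no hypothesis on the (possibly repeated) roots of $A$.

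Next I would show $p\mid R(x)$ by reducing modulo $p$. Over $\F_{p}[x]$ the Frobenius identity $g(x)^{p^{k}}=g(x^{p^{k}})$ holds for every polynomial $g$, since the coefficients lie in $\F_{p}$ and satisfy $c^{p^{k}}=c$. Applying it to $g=A$ gives $\bar A(x)^{p^{k}}=\bar A(x^{p^{k}})$, while applying it to $g=-ax^{l}-b$ gives $\overline{(-ax^{l}-b)^{p^{k}}}=-\bar ax^{lp^{k}}-\bar b$. Substituting both into $\bar R(x)$ cancels the remaining terms, so $\bar R(x)=0$, i.e.\ $p\mid R(x)$.

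Finally I would combine the two divisibilities. Writing $R(x)=A(x)V(x)$ with $V\in\Z[x]$ and reducing modulo $p$ gives $\bar A(x)\bar V(x)=\bar R(x)=0$ in $\F_{p}[x]$. Since $\F_{p}[x]$ is an integral domain and $\bar A$ is monic, hence nonzero, I conclude $\bar V(x)=0$; that is, every coefficient of $V$ is divisible by $p$, so $V(x)=pU(x)$ for some $U(x)\in\Z[x]$. Therefore $R(x)=pA(x)U(x)$, which is exactly the claimed identity. The two delicate points are the passage from divisibility over $\Q[x]$ to divisibility over $\Z[x]$ (resolved by the monicity of $A$) and the cancellation involving the sign $(-1)^{p^{k}}$; the quotient-ring and $\F_{p}$ viewpoints handle the latter uniformly, so no separate treatment of $p=2$ versus odd $p$ is needed. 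A more computational variant would expand $A(x)^{p^{k}}=(x^{l'}+(ax^{l}+b))^{p^{k}}$ by the binomial theorem and use $p\mid\binom{p^{k}}{j}$ for $1\le j\le p^{k}-1$ to reprove $p\mid R(x)$, but since that expansion alone does not produce the factor $A(x)$, the quotient-ring step above remains the essential one and is the line I would present.
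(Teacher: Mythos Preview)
The paper does not prove this lemma; it is quoted verbatim from \cite{jss} and used as a black box, so there is no in-paper argument to compare against. Your proof is correct: the reduction modulo $A(x)$ gives $A(x)\mid R(x)$ (with integrality guaranteed by monicity of $A$), the Frobenius identity in $\F_{p}[x]$ gives $p\mid R(x)$, and the integral-domain step upgrades these to $pA(x)\mid R(x)$. The typical textbook route (and the one hinted at in \cite{jss}) is the binomial expansion $A(x)^{p^{k}}=\bigl(x^{l'}+(ax^{l}+b)\bigr)^{p^{k}}$ together with $p\mid\binom{p^{k}}{j}$ for $0<j<p^{k}$; your quotient-ring argument is cleaner in that it produces the factor $A(x)$ directly rather than having to recognise it inside the expanded cross terms, and it handles $p=2$ without a separate sign check.
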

		
		Now, we present two lemmas which generalize the first and third part of the Theorem (\ref{th1}). 
		\begin{lemma}\label{L5}
			Let $\theta$ be an algebraic integer, $n\geq 2$ be any integer, and $$f(x)= x^{n}+ a_{n-1}x^{n-1}+ a_{n-2}x^{n-2}+\cdots+a_{1}x+ a_{0},$$ be the minimal polynomial of $\theta$ over $\mathbb{Q}.$ Let $K=\mathbb{Q}(\theta)$ be the corresponding number field. Let $p$ be a prime number which divides $a_{i},$ for all $i= 0, 1, 2,\ldots,(n-1).$ Then, $p\nmid [\mathcal{O}_{K}:\mathbb{Z}[\theta]]$ if and only if $p^{2}\nmid  a_{0}.$ 
		\end{lemma}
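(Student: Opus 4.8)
The plan is to apply the Dedekind criterion (Theorem~\ref{T4}) directly; the hypothesis that $p$ divides every $a_i$ forces $\bar f(x)$ to be a pure power of $x$ modulo $p$, so the whole question reduces to a single divisibility check.

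First I would note that, since $p \mid a_i$ for all $i \in \{0,1,\dots,n-1\}$, reduction modulo $p$ gives $\bar f(x) = x^{n}$ in $\F_{p}[x]$. Thus, in the notation of Theorem~\ref{T4}, there is exactly one distinct monic irreducible factor of $\bar f(x)$, namely $\bar f_1(x) = x$, and it occurs with multiplicity $a_1 = n$. Because $n \ge 2$, this multiplicity exceeds $1$, so the ``$a_i = 1$'' alternative of the criterion is never available, and $p \nmid [\mathcal{O}_K:\Z[\theta]]$ holds precisely when $\bar f_1(x) = x$ does not divide $\bar M(x)$.

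Next I would compute $M(x)$ explicitly. Taking the obvious monic lift $f_1(x) = x$ of $\bar f_1(x)$, we obtain
$$M(x) = \frac{1}{p}\big(f(x) - x^{n}\big) = \sum_{i=0}^{n-1} \frac{a_i}{p}\, x^{i},$$
and this indeed lies in $\Z[x]$ exactly because $p$ divides each $a_i$. Consequently $\bar M(x) = \sum_{i=0}^{n-1} \overline{(a_i/p)}\, x^{i}$, and $x \mid \bar M(x)$ in $\F_{p}[x]$ if and only if the constant term $\overline{(a_0/p)}$ is zero in $\F_{p}$, i.e. if and only if $p \mid (a_0/p)$, i.e. if and only if $p^{2} \mid a_0$.

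Putting the two steps together, Theorem~\ref{T4} gives $p \nmid [\mathcal{O}_K:\Z[\theta]]$ if and only if $x \nmid \bar M(x)$, which by the previous paragraph is equivalent to $p^{2} \nmid a_0$, exactly the claimed statement. I do not expect a genuine obstacle here: the only point deserving attention is the remark that $n \ge 2$ excludes the multiplicity-one escape, so that the index condition is truly controlled by whether $p^{2}$ divides the constant term. Specialising to $f(x) = x^{n} + a x^{n-1} + b x + c$ with $p \mid a$, $p \mid b$, $p \mid c$ then recovers part~(1) of Theorem~\ref{th1}.
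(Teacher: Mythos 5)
Your proof is correct and follows essentially the same route as the paper: reduce $f$ modulo $p$ to get $\bar f(x)=x^{n}$, observe that $n\ge 2$ rules out the multiplicity-one alternative in the Dedekind criterion, and check that $x\mid \bar M(x)$ exactly when $p^{2}\mid a_{0}$. The only cosmetic issue is that writing the multiplicity as ``$a_1=n$'' collides with the coefficient $a_1$ of $f$; otherwise nothing to add.
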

		\begin{proof}
			Let $p$ be any prime number such that $p|a_{i},$ for all $i= 0, 1,2, \ldots,(n-1).$ Then, $$f(x)= x^{n}+ a_{n-1}x^{n-1}+ a_{n-2}x^{n-2}+\cdots+a_{1}x+ a_{0} \equiv x^{n} \pmod{p}$$ which implies that $\bar {f}(x)=x^{n}\in \F_{p}[x].$ As $n\geq 2,$ by Dedekind criterion, $p\nmid [\mathcal{O}_{K}:\mathbb{Z}[\theta]]$ if and only if $x$ does not divide $\bar {M}(x),$ where $$M(x)=\frac{a_{n-1}x^{n-1}+ a_{n-2}x^{n-2}+\cdots+a_{1}x+ a_{0}}{p}.$$ Here, $x$ divides $\bar {M}(x)$ if and only if $p^{2}|a_{0}$ or in other words $x$ does not divide $\bar {M}(x)$ if and only if $p^{2}\nmid a_{0}.$ Thus, $p\nmid [\mathcal{O}_{K}:\mathbb{Z}[\theta]]$ if and only if $p^{2}\nmid  a_{0}.$ This completes the proof.   
		\end{proof}
		
		\begin{lemma}\label{L6}
			Let $\theta$ be an algebraic integer, $n \geq 3$ be any integer, and $$f(x)= x^{n}+ax^{n-1}+bx+c$$ be the minimal polynomial of $\theta$ over $\mathbb{Q}.$ Let $K=\mathbb{Q}(\theta)$ and $\mathcal{O}_{K}$ be the ring of algebraic integers of $K.$ Let $p$ be a prime number such that $p\nmid a,$ $p|b,$ and $p|c.$ Then, $p | [\mathcal{O}_{K}:\mathbb{Z}[\theta]]$ if and only if either $p^{2}|c$ or $\bar 
			{ \frac{c}{p}} =\bar{a}\bar {\frac{b}{p}},$ where $\bar {a},$ $\bar {\frac{b}{p}},$ $\bar {\frac{c}{p}}$ $\in \F_{p}.$
		\end{lemma}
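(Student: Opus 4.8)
The plan is to run the Dedekind criterion (Theorem~\ref{T4}) for $f$ at $p$, as was done for the analogous situation in Lemma~\ref{L5}. Since $p\mid b$, $p\mid c$ and $p\nmid a$, reduction modulo $p$ kills the two lowest terms and leaves $\bar f(x)=x^{n}+\bar a\,x^{n-1}=x^{n-1}(x+\bar a)$ in $\F_{p}[x]$; as $\bar a\neq\bar 0$, the factors $x$ and $x+\bar a$ are distinct monic irreducibles, so this is already the prime factorisation of $\bar f$, with $x$ occurring to the power $n-1\geq 2$ (here $n\geq 3$ enters) and $x+\bar a$ to the power $1$.

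Next I would write down the auxiliary polynomial. Taking the evident monic lifts $x$ and $x+a$, one has $x^{n-1}(x+a)=x^{n}+a\,x^{n-1}$, so
\[
M(x)=\frac1p\Bigl(f(x)-x^{n-1}(x+a)\Bigr)=\frac{bx+c}{p}=\frac bp\,x+\frac cp ,
\]
which lies in $\Z[x]$ exactly because $p\mid b$ and $p\mid c$. Hence $\bar M(x)=\overline{(b/p)}\,x+\overline{(c/p)}$ has degree at most $1$; it is divisible by $x$ iff its constant term $\overline{(c/p)}$ vanishes, i.e.\ iff $p^{2}\mid c$, and it is divisible by $x+\bar a$ iff $\bar M(-\bar a)=-\bar a\,\overline{(b/p)}+\overline{(c/p)}=\bar 0$, i.e.\ iff $\overline{(c/p)}=\bar a\,\overline{(b/p)}$.

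Finally I would invoke the criterion. By Theorem~\ref{T4}, $p\nmid[\mathcal O_{K}:\Z[\theta]]$ precisely when $\bar M$ is not divisible by the relevant irreducible factors of $\bar f$, which by the previous paragraph means precisely $p^{2}\nmid c$ together with $\overline{(c/p)}\neq\bar a\,\overline{(b/p)}$; negating this gives the asserted equivalence. The step deserving the most care---and the one I expect to be the main obstacle---is the bookkeeping in the Dedekind criterion: tracking which factor of $\bar f$ is repeated, checking that the outcome is independent of the chosen lifts of the $\bar f_{i}$, and correctly matching divisibility of the degree-$\leq 1$ polynomial $\bar M$ by the linear factors $x$ and $x+\bar a$ with the two displayed congruences in $b/p$ and $c/p$. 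Once that is settled, everything else is routine, and the argument runs in parallel with the proof of Lemma~\ref{L5}.
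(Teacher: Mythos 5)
Your reduction $\bar f(x)=x^{n-1}(x+\bar a)$ and the computation $\bar M(x)=\overline{(b/p)}\,x+\overline{(c/p)}$ coincide with the paper's, and your translations of divisibility ($x\mid\bar M$ iff $p^{2}\mid c$; $(x+\bar a)\mid\bar M$ iff $\overline{(c/p)}=\bar a\,\overline{(b/p)}$) are correct. The gap is in the step you yourself flagged as the delicate one: the final invocation of Theorem~\ref{T4}. That criterion exempts every irreducible factor occurring to the first power (``for each $i$, either $a_i=1$ or $\bar f_i\nmid\bar M$''), and you correctly recorded that $x+\bar a$ occurs in $\bar f$ with exponent exactly $1$. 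Hence the only factor that must be tested against $\bar M$ is the repeated one, $x$, and what the criterion actually delivers is: $p\mid[\mathcal{O}_{K}:\Z[\theta]]$ if and only if $p^{2}\mid c$. The second disjunct $\overline{(c/p)}=\bar a\,\overline{(b/p)}$ is not implied by the criterion, so your concluding sentence (requiring $\bar M$ to avoid \emph{both} linear factors) does not follow from Theorem~\ref{T4}.

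This is not a cosmetic slip, because the equivalence as stated fails. Take $f(x)=x^{3}+x^{2}+2x+6$ (irreducible, since it has no rational root) and $p=2$: then $2\nmid a$, $2\mid b$, $2\mid c$, $4\nmid c$, and $\overline{c/2}=\bar 3=\bar 1=\bar a\,\overline{b/2}$, so the asserted criterion would give $2\mid[\mathcal{O}_{K}:\Z[\theta]]$. But $\bar M(x)=x+\bar 1$ is not divisible by the repeated factor $x$, so Dedekind gives $2\nmid[\mathcal{O}_{K}:\Z[\theta]]$; consistently, $D(f)=-808=-2^{3}\cdot 101$, and an even index would force $D_{K}=-202\equiv 2\pmod 4$, which no field discriminant satisfies. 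Be aware that the paper's own proof of this lemma commits the identical error (it asserts that $p$ divides the index iff \emph{some} $\bar g_i$ divides $\bar M$, justifying this only by $n-1\geq 2$, which makes $x$ but not $x+\bar a$ a repeated factor); so you have reproduced the paper's argument faithfully, error included, but as a proof of the stated ``if and only if'' it is incorrect in the direction that derives $p\mid[\mathcal{O}_{K}:\Z[\theta]]$ from $\overline{(c/p)}=\bar a\,\overline{(b/p)}$.
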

		\begin{proof}
			Let $p$ be a prime number satisfying $p\nmid a,$ $p|b,$ and $p|c.$ Then, $$f(x)=x^{n}+ax^{n-1}+bx+c\equiv x^{n}+ax^{n-1} \pmod{p},$$ i.e. $$\bar {f}(x)=x^{n}+\bar{a}x^{n-1}= x^{n-1}(x+\bar{a}).$$ Here, monic distinct irreducible factors of $\bar{f}(x)$ are $\bar{g_1}(x)=x$ and $\bar{g_2}(x)=x+\bar{a}.$ Then, by Dedekind criterion, we see that prime $p | [\mathcal{O}_{K}:\mathbb{Z}[\theta]]$ if and only if for some $i\in\{1, 2\},$ $\bar{g_i}(x)$ divides $\bar{M}(x)$ ( because, $n-1\geq 2$), where
			\begin{align*}
				M(x)&=\frac{1}{p}[f(x)-{g}_1(x)^{n-1} {g}_2(x)]\\
				&=\frac{1}{p}[ x^{n}+ax^{n-1}+bx+c-x^{n-1}(x+{a})]\\
				&= \frac{1}{p}(bx+c)\\
				&= \frac{bx}{p}+\frac{c}{p}.
			\end{align*}
			Here, $\bar{g_1}(x)$ or $\bar{g_2}(x)$ divides $\bar{M}(x)$ if and only if either $p^{2}|c$ or $ \bar 
			{ \frac{c}{p}} = \bar{a}\bar {\frac{b}{p}},$ where $\bar {a},$ $\bar {\frac{b}{p}},$ $\bar {\frac{c}{p}}$ $\in \F_{p}.$ This completes the proof. 
		\end{proof}
		The following lemma tells about some special primes $p$ such that $p\nmid D.$
		\begin{lemma}\label{L7}
			Let $\theta$ be an algebraic integer and $$f(x)= x^{n}+ax^{n-1}+bx+c$$ be the minimal polynomial of $\theta$ over $\mathbb{Q}$ with $n^{2}=ak,$ $k \in \mathbb{N},$ $n>4,$ and $abc\neq 0.$ Let $K=\mathbb{Q}(\theta)$ and $\mathcal{O}_{K}$ be the ring of algebraic integers of $K.$ If $p$ is an odd prime with $p|a$ and $p\nmid b,$ then $p\nmid [\mathcal{O}_{K}:\mathbb{Z}[\theta]].$ 
			\begin{proof}
				Let $p|a$ and $p\nmid {b}.$ By using $n^{2}=ak$ in the value of $D$ (equation \ref{E1}), we obtain
				\begin{align*} 
					D&=(-1)^{\frac{(n+2)(n-1)}{2}}\bigg[(n-1)^{n-1}a^{n}c^{n-2}+k(n-1)^{n-1}b^{n-1}c+(n-1)^{n-3}b^{n-2}(kc-b)^{2} \nonumber\\
					&-(n-1)^{n-3}b^{n-2}(kc-b)((n-2)bn+ck)\nonumber\\
					&-(1+(-1)^{n})\frac{a(kc-b)[((n-2)ab+cn)^{2}-4abc(n-1)^{2}]^\frac{n-2}{2}}{2^{n-2}}\nonumber\\
					&+\sum_{i=0}^{\lfloor \frac{n-3}{2} \rfloor}\bigg[\frac{2n(n-1)^{2}abc[(2-n)ab-cn]^{n-3-2i}[((n-2)ab+cn)^{2}-4abc(n-1)^{2}]^{i}\binom{n-3}{2i}}{2^{n-3}}\nonumber\\
					&+\frac{a(ck-b)[(2-n)ab-cn]^{n-2-2i}[((n-2)ab+cn)^{2}-4abc(n-1)^{2}]^{i}\binom {n-3}{2i}(n-2)}{2^{n-3}(n-2-2i)}\bigg]\bigg]. 
				\end{align*}
				It is easy to verify that $\binom {n-3}{2i}\frac {(n-2)}{(n-2-2i)}$ is an integer and $p$ is an odd prime, therefore, $$ D\equiv 0 \pmod{p}$$ 
				if and only if
				$$ \bigg (k(n-1)^{n-1}b^{n-1}c+(n-1)^{n-3}b^{n-2}(kc-b)^{2}
				-(n-1)^{n-3}b^{n-2}(kc-b)((n-2)bn+ck)\bigg)\equiv 0 \pmod{p}.$$ 
				Since, $p|a$ implying $p|n,$ so $p\nmid (n-1)$ and also $p\nmid {b},$ hence we get
				$$\bigg (kbc+(kc-b)^{2} -ck(kc-b)\bigg)\equiv 0 \pmod{p},$$ 
				which is further implies that
				$$ b^{2}\equiv 0 \pmod{p}.$$ 
				This is a contradiction as $p\nmid b.$ Consequently, $p \nmid D.$ Finally, from the formula $D= [\mathcal{O}_{K}:\mathbb{Z}[\theta]]^{2}D_{K},$ 
				we conclude that $p \nmid [\mathcal{O}_{K}:\mathbb{Z}[\theta]].$ This completes the proof.
			\end{proof}
		\end{lemma}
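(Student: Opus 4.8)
\noindent The plan is to prove the stronger statement that $p\nmid D$; once that is known, the identity $D=[\mathcal{O}_{K}:\mathbb{Z}[\theta]]^{2}D_{K}$ forces $p\nmid[\mathcal{O}_{K}:\mathbb{Z}[\theta]]$ at once, since $[\mathcal{O}_{K}:\mathbb{Z}[\theta]]^{2}$ divides $D$. So the whole argument is a controlled reduction of the explicit formula \eqref{E1} for $D$ modulo $p$.

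First I would extract the arithmetic consequences of the hypotheses $p\mid a$, $p\nmid b$, $n^{2}=ak$. Since $p\mid a$ divides $n^{2}=ak$ and $p$ is prime, $p\mid n$; hence $p\nmid(n-1)$, so $\overline{n-1}\in\F_{p}^{\times}$. (Lemma \ref{L1} even gives $p^{2}\mid a$, but this is not needed.) Next, substituting $n^{2}=ak$ into \eqref{E1} clears the denominators: $n^{2}c-ab=a(kc-b)$ and $n\big((n-2)ab+cn\big)=a\big((n-2)bn+ck\big)$, so every apparent fraction becomes an honest polynomial in $a,b,c,k,n$, and (using that $\binom{n-3}{2i}\frac{n-2}{n-2-2i}$ is an integer) the whole right side is an integer expression. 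Now clear the remaining powers of $2$, which are units modulo $p$ because $p$ is odd, and inspect each term: the term $(n-1)^{n-1}a^{n}c^{n-2}$, the term $(1+(-1)^{n})\frac{a(kc-b)[\cdots]}{2^{n-2}}$, and every term inside the sum $\sum_{i}$ visibly carries a factor of $a$, hence vanishes modulo $p$. What survives is exactly
\[
D\equiv(-1)^{\frac{(n+2)(n-1)}{2}}(n-1)^{n-3}b^{n-2}\Big[k(n-1)^{2}bc+(kc-b)^{2}-(kc-b)\big((n-2)bn+ck\big)\Big]\pmod{p}.
\]

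To finish, apply $p\mid n$ once more: $(n-1)^{2}\equiv 1$ and $(n-2)bn\equiv 0\pmod p$, so the bracket collapses to $kbc+(kc-b)^{2}-ck(kc-b)=kbc+(kc-b)(kc-b-ck)=kbc-b(kc-b)=b^{2}$. Hence $D\equiv(-1)^{\frac{(n+2)(n-1)}{2}}(n-1)^{n-3}b^{n}\pmod p$, which is nonzero in $\F_{p}$ since $p\nmid b$ and $p\nmid(n-1)$. Therefore $p\nmid D$, and the index formula gives $p\nmid[\mathcal{O}_{K}:\mathbb{Z}[\theta]]$.

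The one place needing care is the middle step: one must check that, after the substitution $n^{2}=ak$, the three surviving terms are precisely those displayed and that every discarded term genuinely contains the factor $a$ — in particular that the summand $\frac{a(ck-b)[(2-n)ab-cn]^{n-2-2i}[\cdots]^{i}\binom{n-3}{2i}(n-2)}{2^{n-3}(n-2-2i)}$ is $a$ times an integer, which is where the integrality of $\binom{n-3}{2i}\frac{n-2}{n-2-2i}$ and the oddness of $p$ enter. Everything else is routine simplification in $\F_{p}$.
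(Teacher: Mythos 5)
Your proposal is correct and follows essentially the same route as the paper: substitute $n^{2}=ak$ to clear denominators, discard every term carrying a factor of $a$ modulo $p$, use $p\mid n$ (hence $p\nmid(n-1)$) to collapse the surviving bracket to $b^{2}$, and conclude $p\nmid D$, hence $p\nmid[\mathcal{O}_{K}:\mathbb{Z}[\theta]]$ via $D=[\mathcal{O}_{K}:\mathbb{Z}[\theta]]^{2}D_{K}$. The only cosmetic difference is that you state the conclusion directly as $D\equiv(\text{unit})\cdot b^{n}\pmod p$ rather than deriving the contradiction $b^{2}\equiv 0\pmod p$ as the paper does.
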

		\section{\bf Proofs of the main theorems}
		\begin{proof}[\bf{Proof of Theorem \ref{th1}}]
			We prove each part of the theorem separately.~Consider the $\textbf{first part},$ when $p|a,$ $p|b,$ and $p|c,$ where $p$ is any prime. If we consider $a_{i}=0,$ for all $i= 2,\ldots,(n-2),$ $a_{0}=c,$ $a_{1}=b,$ and $a_{n-1}=a$ in Lemma (\ref{L5}), then the first part of the theorem holds trivially.
			
			Now, consider the \textbf{second part} when $p|a,$ $p|b$ and $p\nmid c.$ Since $p$ is a prime number and $n^{2}=ak$ therefore, $p|n.$ Let $n=p^{r}m,$ $r \in \N ,$ and $p\nmid m.$ Then, $$f(x)=x^{n}+ax^{n-1}+bx+c\equiv x^{n}+c \pmod{p},$$ i.e. $$\bar {f}(x)=x^{n}+\bar {c}=x^{p^{r}m}+\bar {c}\in \F_{p}[x] .$$ 
			Since $p\nmid c$ implies that $\gcd (p, c)=1$ and using Fermat's little theorem, we have $c^{p^{r}} \equiv c \pmod{p}.$ Now, using binomial theorem, we get $ {f}(x)\equiv x^{p^{r}m}+{c} \equiv (x^{m}+c)^{p^{r}} \pmod{p}.$ Let $\displaystyle\prod_{j=1}^{s} \bar g_{j}(x)$ be the factorization of $x^{m}+\bar {c}$ over the field $\F_{p}.$ We can write $$x^{m}+c= \displaystyle\prod_{j=1}^{s} g_{j}(x)+pU(x),$$ for some $U(x)\in \mathbb{Z}[x]$ and $g_{j}(x)\in \mathbb{Z}[x]$ are monic lifts corresponding to the distinct monic irreducible polynomial factors $\bar g_{j}(x).$ Now,
			\begin{align*}
				f(x)&=x^{n}+ax^{n-1}+bx+c\\
				&=t(x^{p^{r}})+ax^{n-1}+bx,
			\end{align*} 
			where $t(x)=x^{m}+c.$ By using Lemma (\ref{L3}), we have 
			\begin{align}
				\label{eq2}
				f(x)=\bigg(\displaystyle\prod_{j=1}^{s} g_{j}(x)+pU(x)\bigg)^{p^{r}}+p\bigg(\displaystyle\prod_{j=1}^{s} g_{j}(x)+pU(x) \bigg)
				V(x)+(c+(-c)^{p^{r}})+ax^{n-1}+bx,
			\end{align}
			for some $V(x)\in  \mathbb{Z}[x]$.
			Now $\bar {f}(x)=x^{n}+\bar {c}=(x^{m}+\bar {c})^{p^{r}}=\bigg(\displaystyle\prod_{j=1}^{s} \bar g_{j}(x)\bigg)^{p^{r}} .$ On substituting the value of $f(x)$ from (\ref{eq2}) in $M(x)$(\ref {E1}) and using Lemma (\ref{L1}) along with binomial theorem, we obtain $$\bar M(x)= \bigg(\displaystyle\prod_{j=1}^{s} \bar g_{j}(x)\bigg)\bar {V}(x)+\bar {b_1}x+\bar {c_1}, $$ where $b=pb_{1}$ and $c_{1}=\frac{(c+(-c)^{p^{r}})}{p}.$ From Theorem (\ref{T4}), $p\nmid [\mathcal{O}_{K}:\mathbb{Z}[\theta]]$ if and only if $\bar M(x)$ is co-prime to $\displaystyle\prod_{j=1}^{s} \bar g_{j}(x)$ or co-prime to  $\bigg(\displaystyle\prod_{j=1}^{s} \bar g_{j}(x)\bigg)^{p^{r}}=x^{n}+\bar {c}$ which is further implies that $\bar {b_1}x+\bar {c_1}$ is co-prime to $x^{n}+\bar {c}.$ Let $\xi$ be the common zero of $\bar {b_1}x+\bar {c_1}$ and $x^{n}+\bar {c}$ implies that $\bar {b_1}\xi+\bar {c_1}=\bar 0$ and $\xi^{n}+\bar {c}=\bar 0$ which is possible only if either $p|b_{1}$ and $p| c_{1}$ or if $p\nmid b_{1}$ and $p\nmid c_{1},$ then $p|[c(b_{1})^{n}+(-c_{1})^{n}].$ Conversely, $\bar {b_1}x+\bar {c_1}$ and $x^{n}+\bar {c}$ are co-prime if and only if either $p|b_{1}$ and $p\nmid c_{1}$ or $p\nmid b_{1}[c(b_{1})^{n}+(-c_{1})^{n}].$ This completes the proof of the second part.
			
			For the $\textbf{third part},$ by using Lemma (\ref {L6}), $p$ divides $[\mathcal{O}_{K}:\mathbb{Z}[\theta]]$ if and only if either $p^{2}|c$ or $ \bar 
			{ \frac{c}{p}}= \bar{a}\bar {\frac{b}{p}},$ where $\bar {a},$ $\bar {\frac{b}{p}},$ $\bar {\frac{c}{p}}$ $\in \F_{p}.$ Contrapositively,  $p$ does not divides $[\mathcal{O}_{K}:\mathbb{Z}[\theta]]$ if and only if $p^{2}\nmid {c}$ and $ \bar 
			{ \frac{c}{p}} \neq \bar{a}\bar {\frac{b}{p}},$ where $\bar {a},$ $\bar {\frac{b}{p}},$ $\bar {\frac{c}{p}}$ $\in \F_{p}.$
			
			\indent	Now, we discuss the $\textbf{fourth part}$ when $p\nmid ab$ and $p|c.$
			Firstly, we verify whether the polynomial $\bar f(x)=x^{n}+\bar {a}x^{n-1}+\bar {b}x$ has multiple zeros or not in the algebraic closure of the field $\F_{p}.$ As we know that a polynomial is inseparable if its derivative vanish at some root of it. But $$\bar f'(x)=\bar {n}x^{n-1}+\bar a ~\overline {(n-1)}x^{n-2}+\bar {b}$$ does not vanish at $x=\bar 0$ which implies that it is not a repeated zero of $\bar f(x).$ Therefore, $\bar f(x)$ is inseparable if and only if $x^{n-1}+\bar {a}x^{n-2}+\bar {b}=\bar h(x)~\text{(say)}$ is inseparable. Here, $$x^{n-3}[\overline {(n-1)}x+\bar {a}\overline {(n-2)}]=\bar h'(x).$$ If $p|(n-1),$ then it is easy to observe that $\bar h(x)$ is separable.
			Now, let $p\nmid (n-1)$ and $\alpha(\neq \bar 0)$ be the repeated zero of $\bar f(x)$ in the algebraic closure of $\F_{p}.$ Also, $\alpha $ is a repeated zero of $\bar f(x)$ if and only if $\bar h(\alpha)=\bar h'(\alpha)=\bar 0,$ i.e. 
			\begin{align}
				\label{eq3}
				\alpha^{n-1}+\bar {a}\alpha^{n-2}+\bar {b}=\bar 0
			\end{align}
			and
			\begin{align}
				\label{eq5}
				\overline{(n-1)}\alpha+\bar a ~\overline{(n-2)}=\bar 0.
			\end{align}
			Now, from equation (\ref{eq5}), we get $\alpha=-\overline{(n-1)}^{-1}\bar a~ \overline{(n-2)}.$ Using the value of $\alpha$ in equation (\ref{eq3}), we have $$(-\overline{(n-1)}^{-1}\bar a~ \overline{(n-2)})^{n-1}+\bar {a}(-\overline{(n-1)}^{-1}\bar a~ \overline{(n-2)})^{n-2}+\bar {b}=\bar 0$$ implies that $$ a(-a(n-2))^{n-2}+b(n-1)^{n-1}\equiv 0\pmod{p}.$$
			Hence, $\bar f(x)$ is separable if 
			\begin{align}
				\label{eq*}
				a(-a(n-2))^{n-2}+b(n-1)^{n-1}\not\equiv 0\pmod{p}.
			\end{align}
			Here, if $p=2,$ then (\ref {eq*}) holds trivially.\\ 
			Let $p$ be an odd prime and $ a(-a(n-2))^{n-2}+b(n-1)^{n-1}\equiv 0\pmod{p}.$ Then, from equation (\ref {eq5}), $\bar h(x)$ has only one repeated zero $\alpha=\bar n_{1} ~~\text{(say)}.$ Hence, $p\nmid [\mathcal{O}_{K}:\mathbb{Z}[\theta]]$ if and only if $(x-\bar n_{1})\nmid \bar M(x),$ thanks to the Dedekind criterion (\ref {T4}). This completes the proof of the fourth part.
			
			\indent	Consider the $\textbf{fifth part}$ when $p\nmid ac$ and $p|b.$ We prove this part is similar to the fourth part. 
			According to the given conditions, we have $\bar f(x)=x^{n}+\bar {a}x^{n-1}+\bar {c}$ and $$\bar f'(x)=\bar {n}x^{n-1}+\bar a ~\overline {(n-1)}x^{n-2}=x^{n-2}(\bar {n}x+\bar a ~\overline {(n-1)}).$$ From this, we see that $x=\bar 0 $ is not a zero of $\bar f(x),$ so it is not a repeated zero.
			Let $\gamma\neq \bar 0$ be a repeated zero of $\bar f(x)$ in the algebraic closure of $\F_{p}.$ If $p|n,$ then $\bar f'(\gamma)=\bar 0$ implies that $\gamma=\bar 0,$ which is a contradiction. Thus, $\bar f(x)$ is separable. Let $p\nmid n.$ Now $ \bar f'(\gamma)=\bar 0$ results in two possibilities, either $\gamma= \bar 0$ or $\bar {n}\gamma+\bar a ~\overline {(n-1)}=\bar 0$. However, $\gamma\neq \bar 0$ which implies that
			\begin{align}\label{eq100}
				\bar {n}\gamma+\bar a ~\overline {(n-1)}=\bar 0 ~~\text {or}~~\gamma=-(\bar n)^{-1}\bar a ~\overline {(n-1)}.
			\end{align}
			Putting the value of $\gamma$ in $\bar f(\gamma),$ we get $$\bar f(\gamma)=(-(\bar n)^{-1}\bar a ~\overline {(n-1)})^{n}+\bar a(-(\bar n)^{-1}\bar a ~\overline {(n-1)})^{n-1}+\bar c.$$ Here, $\gamma$ is a zero of $\bar f(x)$ if and only if $$\bar f(\gamma)=(-(\bar n)^{-1}\bar a ~\overline {(n-1)})^{n}+\bar a(-(\bar n)^{-1}\bar a ~\overline {(n-1)})^{n-1}+\bar c=\bar 0$$ which is further equivalent to $$(-\bar a ~\overline {(n-1)})^{n}+\bar a \bar n(-\bar a ~\overline {(n-1)})^{n-1}+\bar c\bar {n}^{n}=\bar 0.$$ Thus, $\bar f(x)$ is separable if
			\begin{align}\label{eq**}
				a(- a  {(n-1)})^{n-1}+ c{n}^{n}\not\equiv 0\pmod{p}.
			\end{align}
			Also, if $p=2,$ then (\ref {eq**}) holds trivially.\\
			Let $p$ be an odd prime and $ a(- a  {(n-1)})^{n-1}+ c{n}^{n}\equiv 0\pmod{p}.$ Then, from equation (\ref {eq100}), $\bar f(x)$ has only one repeated zero $\gamma=\bar n_{2} ~~\text{(say)}.$ Thus, $p\nmid [\mathcal{O}_{K}:\mathbb{Z}[\theta]]$ if and only if $(x-\bar n_{2})\nmid \bar M(x),$ thanks to the Dedekind criterion (\ref {T4}). This completes the proof of the fifth part.
			
			\indent	Consider the $\textbf{final part}$ when $p\nmid abc.$ We divide this part into two cases according to $p$ is an odd or even prime.
			
			\textbf{Case 6.1:} Let $p$ be an odd prime. Now, there are two possibilities that $\bar f(x)$ is separable or not. Assume $\zeta$ is a repeated zero of $\bar f(x).$ Then, $\bar f(\zeta)=\bar f'(\zeta)=\bar 0,$ where
			\begin{align}
				\label{eq17}
				\bar f(\zeta)=\zeta^{n}+\bar {a}\zeta^{n-1}+\bar b \zeta+\bar {c}=\bar 0
			\end{align}
			and
			\begin{align}
				\label{eq18}
				\bar f'(\zeta)=\bar {n}\zeta^{n-1}+\bar a ~\overline {(n-1)}\zeta^{n-2}+\bar {b}=\bar 0.
			\end{align}
			Now from equation (\ref {eq18}), we have the following subcases:
			
			\textbf{Subcase 6.1.1:}
			If $p|(n-1),$ then there exist two positive integers $r_0$ and $m_0$ such that $n-1=p^{r_0}m_0$ and $p\nmid m_0.$ Now, from equation (\ref {eq18}), we have  $\zeta^{n-1}=-\bar b.$ By using the value of $\zeta^{n-1}$ in equation (\ref {eq17}), we get $\zeta(-\bar b)-\bar a\bar b+\bar b\zeta+\bar c=\bar 0$ or $\bar a\bar b=\bar c.$ 
			
			Thus, $\bar f(x)$ is separable if $ ab\not \equiv c\pmod{p}.$
			Suppose $\bar a\bar b=\bar c,$ then by using binomial theorem, we have  
			$	\bar f(x)=x^{n}+\bar {a}x^{n-1}+\bar b x+\bar {c}=(x+\bar a)(x^{m_0}+\bar b)^{p^{r_0}}.$
			Let $\displaystyle\prod_{j=1}^{q} \bar F_{j}(x)$ be the factorization of $x^{m_0}+\bar {b}$ over the field $\F_{p},$ where $q\in \N.$ We can write 
			\begin{align}
				\label {eq 101}
				x^{m_0}+b= \displaystyle\prod_{j=1}^{q} F_{j}(x)+pU_{1}(x),
			\end{align}
			for some $U_{1}(x)\in \mathbb{Z}[x]$ and $F_{j}(x)\in \mathbb{Z}[x]$ are monic lifts corresponding to the distinct monic irreducible polynomial factors $\bar F_{j}(x).$ 
			Consider,
			\begin{align}
				f(x)&=x^{n}+ax^{n-1}+bx+c\\
				\label {eq 102}
				&= (x+a)(x^{m_0})^{p^{r_0}}+bx+c.
			\end{align} 
			From equation (\ref {eq 101}), on substituting the value of $x^{m_{0}}$ in (\ref {eq 102}), we have 
			\begin{align}
				\label{eq201}
				f(x)=(x+a)\bigg(\displaystyle\prod_{j=1}^{q} F_{j}(x)+pU_1(x)-b\bigg)^{p^{r_0}}+bx+c.
			\end{align}
			From (\ref{eq201}), putting the value of $f(x)$ in $M(x)$(\ref {E1}) and applying the binomial theorem, we get $$\bar M(x)= (x+\bar a)\bigg(\displaystyle\prod_{j=1}^{q} \bar F_{j}(x)\bigg)\bar {V_1}(x)+\bar {v_1}x+\bar {v_0}, $$ where $v_{1}=\frac{(b+(-b)^{p^{r_0}})}{p}$ and $v_{0}=\frac{(c+a(-b)^{p^{r}})}{p}.$ By using Theorem (\ref{T4}), $p\nmid [\mathcal{O}_{K}:\mathbb{Z}[\theta]]$ if and only if $\bar M(x)$ is co-prime to $(x+\bar a)\displaystyle\prod_{j=1}^{q} \bar F_{j}(x)$ or co-prime to  $(x+\bar a)\bigg(\displaystyle\prod_{j=1}^{q} \bar F_{j}(x)\bigg)^{p^{r_0}}=x^{n}+\bar {a}x^{n-1}+\bar b x+\bar {c}$ which is futher implies that $\bar {v_1}x+\bar {v_0}$ is co-prime to $x^{n}+\bar {a}x^{n-1}+\bar b x+\bar {c}.$ Let $\beta$ be the common zero of $\bar {v_1}x+\bar {v_0}$ and $x^{n}+\bar {a}x^{n-1}+\bar b x+\bar {c}$ implies that $\bar {v_1}\beta+\bar {v_0}=\bar 0$ and $\beta^{n}+\bar {a}\beta^{n-1}+\bar b \beta+\bar {c}=\bar 0$ which is possible only if either $p|v_{1}$ and $p| v_{0}$ or if $p\nmid v_{1}$ and $p\nmid v_{0},$ then $p|[(- v_{0})^{n}+ {a} v_{1}(- v_{0})^{n-1}- b ( v_{1})^{n-1} v_{0}+ {c}( v_{1})^{n}].$ Conversely, $\bar {v_1}x+\bar {v_0}$ and $x^{n}+\bar {a}x^{n-1}+\bar b x+\bar {c}$ are co-prime if and only if either $p|v_{1}$ and $p\nmid v_{0}$ or $p\nmid v_{1}[(- v_{0})^{n}+ {a} v_{1}(- v_{0})^{n-1}- b ( v_{1})^{n-1} v_{0}+ {c}( v_{1})^{n}].$
			
			\textbf{Subcase 6.1.2:}
			If $p|n,$ then $p\nmid (n-1).$ Now from equation (\ref {eq18}), we have 
			\begin{align}
				\label{eq19}
				\zeta^{n-2}=(\bar a)^{-1}\bar {b}.
			\end{align}
			Using the value of $\zeta^{n-2}$ in equation (\ref {eq17}), we get $\zeta^{2}+2\bar a\zeta+\bar a\bar c(\bar b)^{-1}=\bar 0.$ On solving this quadratic equation, we have $$\zeta=\frac {-2\bar a\pm[(2\bar a)^{2}-4\bar a\bar c(\bar b)^{-1}]^{\frac {1}{2}}}{2}=-\bar a\pm[(\bar a)^{2}-\bar a\bar c(\bar b)^{-1}]^{\frac {1}{2}}.$$ Now, putting the value of $\zeta$ in equation (\ref {eq19}), we obtain $$\bar a(-\bar a\pm[(\bar a)^{2}-\bar a\bar c(\bar b)^{-1}]^{\frac {1}{2}})^{n-2}=\bar {b}.$$
			
			Thus, $\bar f(x)$ is separable if $ \bar a(-\bar a\pm[(\bar a)^{2}-\bar a\bar c(\bar b)^{-1}]^{\frac {1}{2}})^{n-2}\neq \bar {b}.$
			Now, if $\bar a(-\bar a\pm[(\bar a)^{2}-\bar a\bar c(\bar b)^{-1}]^{\frac {1}{2}})^{n-2}=\bar {b},$ then $\bar f(x)$ has only two possible zeros $\zeta_{1}=-\bar a+[(\bar a)^{2}-\bar a\bar c(\bar b)^{-1}]^{\frac {1}{2}}$ and $\zeta_{2}=-\bar a-[(\bar a)^{2}-\bar a\bar c(\bar b)^{-1}]^{\frac {1}{2}}.$  Using Dedekind criterion (\ref {T4}), $p\nmid [\mathcal{O}_{K}:\mathbb{Z}[\theta]]$ if and only if $(x-\zeta_{j})\nmid\bar M(x),$ where $j=1,~2.$ 
			
			\textbf{Subcase 6.1.3:}
			If $p\nmid n (n-1),$ then from equation (\ref {eq18}), we have $$\bar {n}\zeta^{n-1}+\bar a ~\overline {(n-1)}\zeta^{n-2}+\bar {b}=\bar 0$$ or
			\begin{align}
				\label{eq20}
				\zeta^{n-2}(\bar {n}\zeta+\bar a ~\overline {(n-1)})+\bar {b}=\bar 0.
			\end{align}
			Here, $\bar {n}\zeta+\bar a ~\overline {(n-1)}$ can not be zero because if it is zero then $\bar b=\bar 0,$ which is not possible (since $p\nmid b$). Therefore, the inverse of $\bar {n}\zeta+\bar a ~\overline {(n-1)}$ exists.
			Now, from equation (\ref {eq20}), we get $$\zeta^{n-2}=-(\bar {n}\zeta+\bar a ~\overline {(n-1)})^{-1}\bar {b}.$$ Using the value of $\zeta^{n-2}$ in equation (\ref {eq17}), we have
			$$\zeta^{2}(-(\bar {n}\zeta+\bar a ~\overline {(n-1)})^{-1}\bar {b})+\bar {a}\zeta(-(\bar {n}\zeta+\bar a ~\overline {(n-1)})^{-1}\bar {b})+\bar b \zeta+\bar {c}=\bar 0$$ or $$\zeta^{2}+\zeta(\bar b-\bar n\bar b)^{-1}(\bar a\bar b-\overline{(n-1)}\bar a \bar b-\bar c\bar n)-(\bar b-\bar n\bar b)^{-1}\overline{(n-1)}\bar a \bar c=\bar 0.$$ The zeros of above quadratic equation is given by 
			\begin{equation*}
				\begin{split}
					\zeta&=\frac{1}{2}\bigg(-(\bar b-\bar n\bar b)^{-1}(\bar a\bar b-\overline{(n-1)}\bar a \bar b-\bar c\bar n)\\&\pm[((\bar b-\bar n\bar b)^{-1}(\bar a\bar b-\overline{(n-1)}\bar a \bar b-\bar c\bar n))^{2}+4(\bar b-\bar n\bar b)^{-1}\overline{(n-1)}\bar a \bar c]^\frac{1}{2}\bigg)=\frac{l_{1}}{2}~(\text{say}).
				\end{split}
			\end{equation*}
			Now, substituting the value of $\zeta$ in equation (\ref {eq20}), we get $$\bigg (\frac{l_{1}}{2}\bigg)^{n-2}\bigg(\bar {n}\bigg(\frac{l_{1}}{2}\bigg)+\bar a ~\overline {(n-1)}\bigg)+\bar {b}=\bar 0$$ or $$(l_{1})^{n-2}(\bar {n}l_{1}+2\bar a ~\overline {(n-1)})+2^{n-1}\bar {b}=\bar 0.$$
			
			Thus, $\bar f(x)$ is separable if $(l_{1})^{n-2}(\bar {n}l_{1}+2\bar a ~\overline {(n-1)})+2^{n-1}\bar {b}\neq \bar 0.$
			Now, if $$(l_{1})^{n-2}(\bar {n}l_{1}+2\bar a ~\overline {(n-1)})+2^{n-1}\bar {b}=\bar 0,$$ then $\bar f(x)$ has only two possible zeros $\zeta_{3}=\frac{1}{2}(A+B)$ and $ \zeta_{4}=\frac{1}{2}(A-B),$ where $$A=-(\bar b-\bar n\bar b)^{-1}(\bar a\bar b-\overline{(n-1)}\bar a \bar b-\bar c\bar n)$$ and $$B=[((\bar b-\bar n\bar b)^{-1}(\bar a\bar b-\overline{(n-1)}\bar a \bar b-\bar c\bar n))^{2}+4(\bar b-\bar n\bar b)^{-1}\overline{(n-1)}\bar a \bar c]^\frac{1}{2}.$$ By using Dedekind criterion (\ref {T4}), $p\nmid [\mathcal{O}_{K}:\mathbb{Z}[\theta]]$ if and only if $(x-\zeta_{j})\nmid\bar M(x),$ where $j=3,~4.$ 
			
			\textbf{Case 6.2:} If $p=2,$ then $\bar f(x)=x^{n}+x^{n-1}+x+1=(x^{n-1}+1)(x+1),$ hence $\bar 1$ is a repeated zero of $\bar f(x).$ Now, we discuss about the common zeros of $\bar f(x)$ and $\bar M(x)$(\ref {E1}). Again, we divide this case into two subcases.
			
			\textbf{Case 6.2.1:}
			If $p=2$ and $n$ is an even integer, then we have $$\bar f(x)=x^{n}+x^{n-1}+ x+ {1}=(x+1)(x^{n-1}+1)=(x+1)\mu (x),$$ where $\mu (x)=x^{n-1}+1.$ Now, $\mu' (x)=\overline{(n-1)}x^{n-2}=x^{n-2}.$ From here, it is clear that $\mu (x)$ and $\mu'(x)$ have no common zeros in the algebraic closure of $\F_{p}.$ Therefore, $\mu(x)$ is separable over the field $\F_{p}.$
			Also, $(x+1)$ is a factor of $\mu (x).$ Therefore, using Dedekind criterion (\ref {T4}), $p\nmid[\mathcal{O}_{K}:\mathbb{Z}[\theta]]$ if and only if $(x+1)\nmid \bar M(x)$(\ref {E1}).  
			
			\textbf{Case 6.2.2:}
			If $p=2$ and $n$ is an odd number. Then, there exist a number $r_1\in \mathbb{N}$ such that $n-1=2^{r_1}k_{1} ~\text{(say)},$ $k_{1}$ is odd and $$\bar f(x)=x^{n}+x^{n-1}+ x+ {1}=(x+1)(x^{k_{1}}+1)^{2^{r_1}}.$$ By our assumption, $\displaystyle\prod_{i=1}^{u} \bar h_{i}(x)$ be the factorization of $x^{k_{1}}+ {1}$ over the field $\F_{p}.$ Thus, we have 
			\begin{equation}
				\label{eq2001}
				x^{k_{1}}+ {1}= \displaystyle\prod_{i=1}^{u} h_{i}(x)+2U_{2}(x), ~\text{for some} ~~U_{2}(x)\in \mathbb{Z}[x].
			\end{equation} 
			Further, we write
			\begin{align}
				\label{eq2002}
				f(x)&=x^{n}+ax^{n-1}+bx+c\\
				&=(x+a)(x^{k_{1}})^{2^{r_1}}+bx+c.
			\end{align} 
			From equation (\ref{eq2001}), using the value of $x^{k_{1}}$ in equation (\ref {eq2002}), we obtain
			\begin{align}
				\label{eq2000}
				f(x)=(x+a)\bigg[\bigg(\displaystyle\prod_{i=1}^{u} h_{i}(x)+2U_{2}(x)-1\bigg)^{2^{r_1}}\bigg]+bx+c.
			\end{align}
			
			On substituting the value of $f(x)$ from (\ref{eq2000}) in $M(x)(\ref {E1}),$ we have $$\bar M(x)= \bigg(\displaystyle\prod_{i=1}^{u} \bar h_{i}(x)\bigg)\bar {W}(x)+\overline {\frac{b+1}{2}}x+\overline  {\frac{a+c}{2}}, $$ where ${W}(x)$ contain other remaining terms. Now, let $\nu$ be the common zero of $\bar f(x)$ and $\bar M(x).$ Then, $\bar M(\nu)= \overline {\frac{b+1}{2}}\nu+\overline  {\frac{a+c}{2}}=\bar {0} $ implies that if $2|(\frac{b+1}{2}),$ then $2|(\frac{a+c}{2})$ and if $2\nmid (\frac{b+1}{2}),$ then $2\nmid (\frac{a+c}{2})$ and conversely. Otherwise, $\bar f(x)$ and $\bar M(x)$ have no common zeros. Thus, by using the Dedekind criterion(\ref {T4}), $2$ does not divide $[\mathcal{O}_{K}:\mathbb{Z}[\theta]]$ if and only if either $2|(\frac{b+1}{2})$ and $2\nmid (\frac{a+c}{2})$ or $2\nmid (\frac{b+1}{2})$ and $2| (\frac{a+c}{2})$
			Hence, taking the cases \textbf {6.1} and \textbf {6.2} together, we complete the proof of the final part.
			
			From the Lemma (\ref {L7}), there does not exist any odd prime $p$ which satisfies the given hypothesis along with the condition $p|a$ and $p\nmid b.$ 
			If $p=2$ and $2|a$ and $2\nmid b,$ then $\bar f(x)$ is separable. Therefore, using the Dedekind criterion(\ref {T4}), $2\nmid [\mathcal{O}_{K}:\mathbb{Z}[\theta]].$
			
			This completes the proof of the theorem.
			
		\end{proof}
		\begin{proof}[\bf{Proof of Corollary \ref{C1}}]
			The proof of the corollary follows from Theorem (\ref {th1}). Indeed, if each prime $p$ divides $D$ and satisfies one of the following conditions from (1) to (8) of Theorem (\ref {th1}), then $p\nmid[\mathcal{O}_{K}:\mathbb{Z}[\theta]].$ Therefore, by using the formula $$D= [\mathcal{O}_{K}:\mathbb{Z}[\theta]]^{2}D_{K},$$ we have $[\mathcal{O}_{K}:\mathbb{Z}[\theta]]=1$ implies that $\mathcal{O}_{K}=\mathbb{Z}[\theta].$ The Converse of the corollary holds trivially (Theorem \ref {th1}). This completes the proof. 
		\end{proof}
		
		\begin{proof}[\bf{Proof of Proposition \ref{P1}}]
			Let $p$ be an odd prime satisfying the conditions $p \nmid a,$ $p \nmid b,$ $p| c,$ $p|(n-1),$ and $abc\neq 0,$ $n^{2}=ak,$ $k \in \mathbb{N}.$ Let $ D\equiv 0 \pmod{p}.$ Now, using $n^{2}=ak$ in the value of $D$ as obtained in equation (\ref{E1}), we get
			\begin{align}\label{eq40} 
				D&=(-1)^{\frac{(n+2)(n-1)}{2}}\bigg[(n-1)^{n-1}a^{n}c^{n-2}+k(n-1)^{n-1}b^{n-1}c+(n-1)^{n-3}b^{n-2}(kc-b)^{2} \nonumber\\
				&-(n-1)^{n-3}b^{n-2}(kc-b)((n-2)bn+ck)\nonumber\\
				&-(1+(-1)^{n})\frac{a(kc-b)[((n-2)ab+cn)^{2}-4abc(n-1)^{2}]^\frac{n-2}{2}}{2^{n-2}}\nonumber\\
				&+\sum_{i=0}^{\lfloor \frac{n-3}{2} \rfloor}\bigg[\frac{2n(n-1)^{2}abc[(2-n)ab-cn]^{n-3-2i}[((n-2)ab+cn)^{2}-4abc(n-1)^{2}]^{i}\binom{n-3}{2i}}{2^{n-3}}\nonumber\\
				&+\frac{a(ck-b)[(2-n)ab-cn]^{n-2-2i}[((n-2)ab+cn)^{2}-4abc(n-1)^{2}]^{i}\binom {n-3}{2i}(n-2)}{2^{n-3}(n-2-2i)}\bigg]\bigg]. 
			\end{align}
			Also, we can verify that $\binom {n-3}{2i}\frac {(n-2)}{(n-2-2i)}$ is an integer and $p$ is an odd prime, therefore, $ D\equiv 0 \pmod{p}$ 
			leading to the following implication:  
			
			$$\hspace{-7cm}\bigg (-(1+(-1)^{n})\frac {(-ab)((n-2)ab)^{n-2}}{2^{n-2}}
			$$ 
			$$\hspace{2cm}+~(-1)^{n-2}\frac {(-ab)((n-2)ab)^{n-2}}{2^{n-3}}\sum_{i=0}^{\lfloor \frac{n-3}{2} \rfloor}\binom{n-3}{2i}\frac{n-2}{n-2-2i}\bigg)\equiv 0 \pmod{p}.
			$$ This further reduces to
			$$\frac{(-ab)((n-2)ab)^{n-2}}{2^{n-2}}\bigg(2(-1)^{n-2}\sum_{i=0}^{\lfloor \frac{n-3}{2} \rfloor}\binom{n-3}{2i}\frac{n-2}{n-2-2i}-(1+(-1)^{n})\bigg)\equiv 0 \pmod{p}.$$ But $p\nmid(-ab)$ since if $p|(-ab),$ then $p|a$ or $p|b$ which is not possible due to our assumption$ p \nmid a,$ $p \nmid b,$ and also $p\nmid ab(n-2)$ because if $p|ab(n-2),$ then $p|(n-2)$ and this is impossible (since $p|(n-1)$). Thus, we have $$\bigg(2(-1)^{n-2}\sum_{i=0}^{\lfloor \frac{n-3}{2} \rfloor}\binom{n-3}{2i}\frac{n-2}{n-2-2i}-(1+(-1)^{n})\bigg)\equiv 0 \pmod{p} $$ or $$
			\left\{\begin{array}{ll}
				\displaystyle\sum_{i=0}^{\lfloor \frac{n-3}{2} \rfloor}\binom{n-3}{2i}\left[\frac{n-2}{n-2-2i}\right] \equiv 1\pmod{p}, &  {\rm if }\; n \text{~is~even},\\
				\displaystyle\sum_{i=0}^{\lfloor \frac{n-3}{2} \rfloor}\binom{n-3}{2i}\left[\frac{n-2}{n-2-2i}\right]\equiv 0\pmod{p}, &  {\rm if }\; n \text{~is~odd}. 
			\end{array}\right.$$ 
			Again, from the part (4) of Theorem (\ref {th1}), we have $p \nmid [\mathcal{O}_{K}:\mathbb{Z}[\theta]].$ Now, from the well known formula $D= [\mathcal{O}_{K}:\mathbb{Z}[\theta]]^{2}D_{K},$ it is clear that $$p|D_{K}~~ \text {if and only if}~~	\left\{\begin{array}{ll}
				\displaystyle\sum_{i=0}^{\lfloor \frac{n-3}{2} \rfloor}\binom{n-3}{2i}\left[\frac{n-2}{n-2-2i}\right]\equiv 1\pmod{p}, &  {\rm if }\; n \text{~is~even},\\
				\displaystyle\sum_{i=0}^{\lfloor \frac{n-3}{2} \rfloor}\binom{n-3}{2i}\left[\frac{n-2}{n-2-2i}\right]\equiv 0\pmod{p}, &  {\rm if }\; n \text{~is~odd}. 
			\end{array}\right. $$ This completes the proof.
		\end{proof}
		\begin{proof}[\bf{Proof of proposition \ref{T2}}]
			Consider the first case, when $p\nmid a,$ $p\nmid c ,$ $p|b,$ and $p|n.$ Suppose $p|D.$ Then, from (\ref {eq40}), we observe that $ (n-1)^{n-1}a^{n}c^{n-2}\equiv 0 \pmod{p},$ which is a contradiction as $p\nmid a,$ $p\nmid c,$ and $p\nmid (n-1)$ (because $p|n$). Thus, $p\nmid D$ implies that $p\nmid [\mathcal{O}_{K}:\mathbb{Z}[\theta]].$\\Now, we prove  the final case, when $p\nmid a,$ $p\nmid b,$ $p|c,$ and $p|(n-2).$ Again, in a similar process, from (\ref{eq40}), if $p|D,$ then $ (n-1)^{n-3}b^{n-2}(kc-b)^{2}\equiv 0 \pmod{p}$ or $ b^{n}(n-1)^{n-3}\equiv 0 \pmod{p},$ which is again a contradiction as $p\nmid b$ and $p\nmid (n-1)$ (because $p|(n-2)$). Hence, $p\nmid {D}$ and this indicates that  $p\nmid [O_{K}:\mathbb{Z}[\theta]].$ This completes the proof.
		\end{proof}
		
		\section{\bf Examples}	 
		In this segment, we present a few examples that demonstrate the outcomes we have obtained. In the following examples except (\ref{ex}), $K=\mathbb{Q}(\theta)$ be an algebraic number field related to algebraic integer $\theta$ with minimal polynomial $f(x)$ and $\mathcal{O}_{K}$ denotes the ring of algebraic integers of the number field $K.$ 
		\begin{example}
			Let $f(x)= x^{6}+4x^{5}+x+3$ be the minimal polynomial of $\theta$ over $\mathbb{Q}.$ Here, $|D|=3^{2}.7561.15269$ and by part (4) $3| [\mathcal{O}_{K}:\mathbb{Z}[\theta]]$ because $4^{9}+5^{5}\equiv 0 \pmod{3},$ $3\nmid 5,$ and $(x+2)|\bar M(x),$ where $M(x)=x^4+2x^2+2x+1.$ As we know that the discriminant $D$ is expressed as $D= [\mathcal{O}_{K}:\mathbb{Z}[\theta]]^{2}D_{K},$ therefore, $[\mathcal{O}_{K}:\mathbb{Z}[\theta]]=3$ which implies that the field  $K$ is not monogenic with respect to $\theta$.
		\end{example}	
		\begin{example}
			Let $f(x)= x^{5}+x^{4}+3x+6$ be the minimal polynomial of $\theta$ over $\mathbb{Q}.$ Here, the question arises whether the number field $K$ is monogenic or not. Then, the answer will be YES, because $|D|=3^{3}.5.18691$ and by part (3) of Theorem (\ref{th1}) $3\nmid [\mathcal{O}_{K}:\mathbb{Z}[\theta]].$ Also, the discriminant $D$ is expressed as $D= [\mathcal{O}_{K}:\mathbb{Z}[\theta]]^{2}D_{K},$ therefore, $[\mathcal{O}_{K}:\mathbb{Z}[\theta]]=1$ implying that the field $K$ is monogenic.
		\end{example}	
		\begin{example}
			Let $f(x)= x^{6}+9x^{5}+3x+18$ be the minimal polynomial of $\theta$ over $\mathbb{Q}.$ Here, $3|a,~ 3|b,~ 3|c, ~~\text{and}~ ~3^{2}|c,~\text{ where}~~ a=9,~ b=3, ~\text{and}~~c=18.$ Then, by part (1) of Theorem (\ref{th1}), $3|[\mathcal{O}_{K}:\mathbb{Z}[\theta]]$ which implies that the field $K$ is not monogenic with respect to $\theta$.
		\end{example}	
		
		\begin{example}\label{ex}
			Let $f(x)= x^{30030}+44100x^{30029}+143x+7507$ be the polynomial over $\mathbb{Q}.$ Then, using Theorem (\ref{T2}) and Lemma (\ref {L7}), the discriminant $D$ of the given polynomial $f(x)$ does not belong to the ideals $$2\mathbb{Z}, ~3\mathbb{Z},~ 5\mathbb{Z},~ 7\mathbb{Z}, ~11\mathbb{Z},  ~13\mathbb{Z}, ~\text{and} ~7507\mathbb{Z}$$ of ring $\mathbb{Z}.$	
		\end{example}
		
		\section{Data Availablity} 	
		The authors confirm that their manuscript has no associated data.
		\section{Competing Interests}
		The authors confirm that they have no competing interest. 
		
		\section{Acknowledgement} Karishan Kumar extends his gratitude to the CSIR fellowship for partial support under the file no: 
		09/1005(16567)/2023-EMR-I.

	\end{document}